\tikzstyle arrowstyle=[scale=1]
\tikzstyle directed=[postaction={decorate,decoration={markings,mark=at position .5 with {\arrow[arrowstyle]{stealth}}}}]
\tikzstyle reverse directed=[postaction={decorate,decoration={markings,mark=at position .5 with {\arrowreversed[arrowstyle]{stealth};}}}]
\theoremstyle{plain}
\newtheorem{theorem}{Theorem }
\newtheorem{prop}{Proposition}
\newtheorem{cor}{Corollary}
\newtheorem{remark}{Remark}
\newtheorem{que}{Question}
\newtheorem{Con}{Conjecture}
\long\def\begcom#1\endcom{}
\newcommand{\diam}{\operatorname{diam}}
\newcommand{\graph}{\operatorname{graph}}
\newcommand{\length}{\operatorname{\length}}
\newcommand{\Diff}{\operatorname{Diff}}
\newcommand{\vep}{\varepsilon}
\def\Orb{\operatorname{Orb}}
\def\Diff{\operatorname{Diff}}
\def\supp{\operatorname{supp}}
\def\length{\operatorname{length}}
\def\ln{\operatorname{ln}}
\def\top{\operatorname{top}}
\def\Int{\operatorname{Int}}
\def\dim{\operatorname{dim}}
\def\vep{\varepsilon}
\begin{document}

\title{Metric entropy and  homoclinic  growth rate }

\author[Gang Liao and Jing Wei]{Gang Liao and Jing Wei}

\email{lg@suda.edu.cn}

\email{weijing1910@163.com}

\email{}

\date{}

	\thanks{2010 {\it Mathematics Subject Classification}. {37C29, 37D25, 37B10, 37C35}}

\keywords{homoclinic point;   metric entropy; Newhouse domain; super-exponential  growth}

\thanks{}

\thanks{School of Mathematical Sciences, Center for Dynamical Systems and Differential Equations, Soochow University,
	Suzhou 215006,  China;  This research  was partially supported by the National Key Research and Development Program of China (2022YFA1005802),   NSFC (12071328,  12122113).  Jing Wei  is the corresponding  author}

%\footnote{This work was support by NSFC (Grant No. 11201431) in China.}
\maketitle

%%-------------------------------------------Abstract------------------------------------------------

%$$\textbf{\Large{Metric entropy and the growth of homoclinic points }}$$

\begin{abstract} 
In this paper, we investigate the relationship between chaos and homoclinic orbits from a quantitative perspective. Let $f$  be a $C^r$  diffeomorphism ($r > 1$) on a compact Riemannian manifold preserving an ergodic hyperbolic measure $\mu$. We show that the homoclinic growth rate is bounded below by the metric entropy $h_\mu(f)$:
\smallskip
\smallskip
\smallskip

\begin{itemize}
\item[] (Log-type)\quad
$\limsup_{n \to \infty} \frac{1}{n} \log \sharp H(p, n, \vep) \ge h_\mu(f)$;
\\
\item[] (Exp-type)\quad
$\limsup_{n \to \infty} \frac{\sharp H(p, n, \vep)}{e^{n h_\mu(f)}} > 0$, if $\mu$ is a measure of maximal entropy.
\end{itemize}
\smallskip
\smallskip
\smallskip
Here, $p$ is any hyperbolic periodic point homoclinically related to $\mu$, and $H(p, n, \vep)$ denotes the set of $\vep$-order $n$  homoclinic points.

This result generalizes Mendoza’s work \cite{ML} from surfaces to higher-dimensional systems from a measure-theoretic viewpoint. We also examine the sharpness of this estimate by demonstrating that, in the Newhouse domain, $C^r$-generic diffeomorphisms exhibit a superexponential growth in the number of homoclinic points.

\end{abstract}

\allowdisplaybreaks

\section{Introduction}\label{sec:intro}
The homoclinic point is an important concept in dynamical systems.
In the late 19th century, while studying the restricted three-body problem, H. Poincar\'e first observed the existence of transverse homoclinic orbits.
This structure leads to the emergence of infinitely many periodic points, thereby inducing extreme complexity in the system's behavior.
In the 1960s, Smale \cite{Sm} introduced the horseshoe map.
It was proven that the existence of transverse homoclinic point is equivalent to the existence of the horseshoe in the neighborhood of this transverse homoclinic point by the Birkhoff-Smale Theorem. Consequently, the presence of a transverse homoclinic point implies that the system has positive topological entropy.

A famous conjecture of Palis \cite{Pa2} states: 
\begin{Con}\label{Palis}Any diffeomorphism can be $C^r$  approximated by a Morse–Smale one or by one
	exhibiting a transversal homoclinic orbit.
\end{Con}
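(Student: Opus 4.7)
The plan is to approach the conjecture by contradiction: assume $f \in \Diff^r(M)$ admits a $C^r$-neighborhood $\cU$ containing neither a Morse--Smale diffeomorphism nor one with a transverse homoclinic orbit, and try to derive structural rigidity on $f$ that contradicts one of the two alternatives. First I would establish recurrence inside the nonwandering set: since no perturbation in $\cU$ is Morse--Smale, the chain-recurrent set must contain a nontrivial (non-periodic or cycle-connected) class for a $C^r$-dense subset of $\cU$. One then tries to use perturbation techniques (closing-type and connecting-type lemmas) to produce, generically in $\cU$, a hyperbolic periodic saddle $p$ whose homoclinic class $\rH(p,f)$ is nondegenerate.

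Given such a $p$, the next step is to analyze the geometry of $W^s(p)$ and $W^u(p)$. If these manifolds meet transversally anywhere, the conclusion is immediate. Otherwise every intersection is tangential, and one would combine a Kupka--Smale genericity argument with a local perturbation that opens a tangency into a transverse crossing. If instead $W^s(p) \cap W^u(p) = \{p\}$, one would try to perturb $f$ inside $\cU$ to force the two manifolds into contact, exploiting the recurrence already established in $\Omega(f)$; this is essentially an attempt to realize Smale's spectral-decomposition picture inside $\cU$.

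The principal obstacle, and the reason the conjecture remains open for $r > 1$, is that the $C^r$ perturbation toolbox is severely limited: the $C^r$ closing lemma of Pugh--Robinson is itself a famous open problem, and Hayashi-type connecting lemmas have no known $C^r$ analogue. In the $C^1$ topology on surfaces, Pujals--Sambarino confirmed the conjecture via a detailed analysis of dominated splittings and uniformly hyperbolic pieces, but their argument leans essentially on $C^1$ perturbation machinery. A measure-theoretic line of attack compatible with the present paper would be to exploit any hyperbolic ergodic invariant measure of $f$: Katok's horseshoe construction would then produce, for a diffeomorphism $C^r$-close to $f$, a periodic saddle with a transverse homoclinic intersection. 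Reducing the general conjecture to the existence of such a measure---that is, to a $C^r$-generic ergodic closing statement---is the hard part, and it is precisely where new ideas appear to be required.
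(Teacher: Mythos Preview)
The paper does not prove this statement: it is stated explicitly as a \emph{Conjecture} (Palis's conjecture), used only as motivation for the quantitative questions the paper actually addresses. The paper merely records, immediately after the statement, that the conjecture is known for $C^1$ diffeomorphisms by Bonatti--Gan--Wen in dimension~3 and Crovisier in any dimension, and leaves it open for $r>1$. There is therefore no ``paper's own proof'' to compare against.

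Your proposal is not a proof either, and you seem to be aware of this: what you have written is a correct survey of the natural strategy (produce a nontrivial chain-recurrent class, find a saddle, perturb to create a transverse homoclinic intersection) together with an accurate diagnosis of why it stalls for $r>1$ --- namely the absence of a $C^r$ closing lemma and $C^r$ connecting lemmas. That diagnosis is exactly the current state of the problem. The measure-theoretic route you mention in the last paragraph (use a hyperbolic ergodic measure and Katok's horseshoe) is indeed the mechanism behind Theorems~\ref{theorem a} and~\ref{theorem b} of the paper, but those theorems \emph{assume} the existence of such a measure rather than produce one by perturbation; reducing the conjecture to that existence statement is, as you say, the hard part and remains open.

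In short: there is no gap to name because you have not claimed a proof, and the paper has not given one. Your outline of obstructions is accurate and consistent with what the paper cites.
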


 Morse–Smale systems admit very simple dynamics: the non-wandering 
set is hyperbolic and finite,  thus the system has zero entropy.  
It exhibits that the complicated dynamics are concentrated in the domain related to transverse homoclinic orbits: 
$$\overline{\{f:\,\,f\,\,\text{has positive topological entropy}\}}= \overline{\{f:\,f\,\,\text{has  transverse homoclinic orbits}\}}.$$
The Conjecture \ref{Palis} is shown to hold for  $C^1$ diffeomorphisms (Bonatti-Gan-Wen \cite{BGW} in dimension 3 and Crovisier \cite{SC} in any dimension).  
This Conjecture provides the qualitative description: chaos arises from transverse homoclinic orbits in the perturbation sense.  A natural question follows: 
\begin{que}\label{entropy and hom} How to establish a quantitative relationship between chaos and transverse homoclinic orbits? 
\end{que}
If the above question is properly solved, then the complexity of the system can be quantitatively analyzed by counting the number of transverse homoclinic orbits.

In dynamical systems, entropy measures the complexity of the orbits.
Systems with positive entropy exhibit chaotic dynamics.
As noted, chaos arises from transverse homoclinic points under permissible perturbation.
As a homoclinic orbit contains infinitely many points, a precise counting method is essential.

In order to count transverse homoclinic points, Mendoza \cite{ML} introduced the concept of  {\em $\vep$-order-$n$}  homoclinic points,  which remain outside $\vep$-neighborhood of the periodic point under iterations within time $n$.    If the homoclinic point stays $\vep$-close to the periodic point, it does not contribute complexity at that time. 
It is shown that if the homoclinic class (the closure of transverse homoclinic points) is   {\bf uniformly  hyperbolic}, then the topological entropy of the homoclinic class equals the exponential growth rate of homoclinic points of $\varepsilon$-order $n$. 
When the homoclinic class is non-uniformly hyperbolic, the equality is established {\bf on surface}.  
In the high dimensional case, it is still unknown. 
In this paper, we go beyond the uniform hyperbolicity and aim to verify  Question  \ref{entropy and hom} {\bf in  high dimensions}.

By the variational principle,  the topological entropy is approximated by the metric entropy of ergodic measures. For surface diffeomorphism with positive topological entropy, approximating ergodic measures also have positive metric entropy and thus are necessarily hyperbolic (i.e., all Lyapunov exponents are nonzero).   

In high dimensions, we give the precise relationship between the metric entropy of hyperbolic measure and the growth of homoclinic points.  For the hyperbolic ergodic measure, a challenge is to define its homoclinic points since it is not generally supported on a periodic orbit.  As the recent work of Buzzi-Crovisier-Sarig \cite{BCS} investigating the homoclinic relation of measures, we study the homoclinic growth of hyperbolic measures by considering the periodic orbits homoclinically related to the measure.

Our first result is:

\begin{theorem}[log-type]\label{theorem a}
	Let $f$ be a $C^{r}$ diffeomorphism with $r>1$ on a compact Riemannian manifold $M$ and $\mu$ be a hyperbolic ergodic measure with metric entropy $h_{\mu}(f)$. 
	Then for small $\varepsilon >0$,
	$$
	\limsup _{n\to \infty } \frac{1}{n} \log \sharp H(p,n,\varepsilon ) \ge  h_{\mu}(f),
	$$where $p$ is any hyperbolic periodic point homoclinically  related to $\mu$ and $H(p,n,\varepsilon) $ is the set of homoclinic points of $\vep$-order-$n$. 
\end{theorem}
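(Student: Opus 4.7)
The plan is to reduce the hyperbolic-measure case to the uniformly hyperbolic case (the known part of \cite{ML}) via Katok's horseshoe theorem, transfer homoclinic growth from the horseshoe to $p$ via the $\lambda$-lemma, and preserve the $\vep$-order-$n$ condition by arranging a geometric separation. Concretely, fix $\delta>0$. Since $f$ is $C^{1+\alpha}$ (a consequence of $r>1$) and $\mu$ is ergodic hyperbolic, Katok's theorem produces a compact $f$-invariant uniformly hyperbolic set $\Lambda=\Lambda_\delta$, sitting inside a Pesin block of $\mu$, with $h_{\text{top}}(f|_\Lambda)>h_\mu(f)-\delta$. Using absolute continuity of Pesin laminations, every periodic orbit of $\Lambda$ is homoclinically related to $\mu$ (standard in the framework of \cite{BCS}). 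Combined with the homoclinic relation of $p$ to $\mu$ and transitivity of the homoclinic relation among hyperbolic periodic orbits, every periodic $q\in\Lambda$ is homoclinically related to $p$, so both $W^s(p)\pitchfork W^u(q)$ and $W^u(p)\pitchfork W^s(q)$ are nonempty.

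Next, Bowen's estimate for basic sets gives $\sharp\Per_n(f|_\Lambda)\ge e^{n(h_\mu(f)-2\delta)}$ for $n$ large. For each $n$-periodic $q\in\Lambda$, a quantitative $\lambda$-lemma applied along the two heteroclinic connections above produces a transverse homoclinic point $x_q\in W^s(p)\cap W^u(p)$ whose orbit segment $\{f^i(x_q)\}_{0\le i\le n}$ shadows $\Orb(q)$ within an exponentially small error $\tau_n$. Distinct $n$-periodic orbits in $\Lambda$ have distinct itineraries in the Markov partition, so the shadowing preserves enough of each itinerary to make $q\mapsto x_q$ injective for large $n$. Now choose $\vep>0$ so that $\Lambda\cap B(\Orb(p),2\vep)=\emptyset$; this is possible after restricting the Pesin block to the complement of a small neighborhood of the finite set $\Orb(p)$, at an entropy loss at most $\delta$ by the non-atomicity of $\mu$ (we may assume $h_\mu(f)>0$, else the theorem is vacuous). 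The shadowing then forces each orbit segment $\{f^i(x_q)\}_{0\le i\le n}$ to remain $\vep$-far from $\Orb(p)$, so $x_q\in H(p,n,\vep)$. Combining yields
\[
\sharp H(p,n,\vep)\ge \sharp\Per_n(f|_\Lambda)\ge e^{n(h_\mu(f)-2\delta)}.
\]
Taking $\limsup\frac{1}{n}\log$ on both sides and then letting $\delta\to 0$ proves the claim.

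The main obstacle is the simultaneous arrangement of three properties of $\Lambda$: (i) uniform hyperbolicity with $h_{\text{top}}(f|_\Lambda)$ close to $h_\mu(f)$; (ii) the homoclinic relation of every periodic orbit of $\Lambda$ to $\mu$; and (iii) uniform separation of $\Lambda$ from $\Orb(p)$. Items (i)--(ii) together are provided by Katok's construction combined with \cite{BCS}, but reconciling them with (iii) requires cutting the Pesin block used in the construction to the complement of a neighborhood of $\Orb(p)$ and verifying that this restriction costs at most $\delta$ in entropy. A secondary subtlety is delivering the $\lambda$-lemma in a form quantitative enough that the shadowing error $\tau_n$ beats the Markov separation scale at time $n$, which is what secures the injectivity of $q\mapsto x_q$; this should follow from standard hyperbolic box estimates in uniformly hyperbolic coordinates near $\Lambda$.
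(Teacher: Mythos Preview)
Your overall strategy---Katok horseshoe with entropy close to $h_\mu(f)$, homoclinic relation to $p$ via transitivity, then the $\lambda$-lemma to convert growth inside the horseshoe into growth of $H(p,n,\varepsilon)$---is exactly the paper's. The difference is in the bookkeeping. The paper fixes a \emph{single} periodic $q$ in the horseshoe base $\Upsilon_0$, invokes Mendoza's equality $h_{\mathrm{top}}(f^l|\Upsilon_0)=\lim\frac{1}{n}\log\sharp(H_{f^l}(q,n,\varepsilon)\cap\Upsilon_0)$ to count homoclinic points of $q$ inside the horseshoe, and then uses one fixed pair of heteroclinic connections between $p$ and $q$ to transport each such point $y$ to a homoclinic point of $p$. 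You instead count $\Per_n(f|_\Lambda)$ via Bowen and let $q$ range over all $n$-periodic points, which is why you end up worrying about $q$-dependent transition times and about separating $\Lambda$ from $\Orb(p)$.

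Two remarks on your version. First, the assertion ``$x_q\in H(p,n,\varepsilon)$'' is not literally correct: by your own construction $f^0(x_q)$ shadows a point of $\Lambda$, which you have placed $2\varepsilon$-far from $\Orb(p)$, so $\Theta^u(x_q,p,\varepsilon)\ne 0$. What you really obtain is that some backward iterate of $x_q$ lies in $H(p,m,\varepsilon)$ for some $m$; your separation condition gives $m\ge n$, but you also need $m\le n+C$ with $C$ independent of $n$, and this requires a uniform bound on the transition time that you acknowledge only obliquely. Second, the separation requirement (your item~(iii)) and the attendant entropy-loss argument are unnecessary. The paper sidesteps this entirely by working with the fundamental domains $B^u=W^u_{\varepsilon'}(p)\setminus g^{-1}W^u_{\varepsilon'}(p)$ and $B^s=W^s_{\varepsilon'}(p)\setminus gW^s_{\varepsilon'}(p)$: one arranges, via the $\lambda$-lemma applied once to the fixed $p$--$q$ connections, that the constructed homoclinic orbit has a specific iterate in $B^u$ and another in $B^s$, and then the order is pinned down \emph{exactly} (landing in $B^s$ is automatically the first entry into $W^s_{\varepsilon'}(p)$). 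This removes any need for $\Lambda$ to avoid a neighborhood of $\Orb(p)$ and makes the role of $\varepsilon'$ transparent. What you flagged as ``the main obstacle'' is thus an artifact of your bookkeeping rather than a genuine difficulty in the problem.
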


Theorem \ref{theorem a} provides a log-type homoclinic growth for any hyperbolic ergodic measure.  
When $f$ admits a hyperbolic measure of maximal entropy (i.e., the topological entropy $h_{\top}(f)$), we obtain an exp-type estimate:

\begin{theorem}[exp-type]\label{theorem b}
	Let $f$ be a $C^{r}$ diffeomorphism with $r>1$ on a compact Riemannian manifold $M$ and $\mu$ be a hyperbolic ergodic measure of maximal entropy. 
	Then for any hyperbolic periodic point $p$ homoclinically related to $\mu$, and small $\varepsilon>0$,
	$$
	\underset{n\to\infty}{\limsup}\frac{\sharp H(p,n,\varepsilon)}{e^{nh_{\mu}(f)}} >0.
	$$
\end{theorem}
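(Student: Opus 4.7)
My plan is to sharpen the horseshoe approximation underlying Theorem \ref{theorem a} by using the maximal-entropy hypothesis $h_\mu(f)=h_{\top}(f)$, and to upgrade the log-type bound to an exp-type bound by optimizing the time parameter. The idea: Katok horseshoes $\Lambda_k$ homoclinically related to $p$ with $h(f|_{\Lambda_k})\to h_\mu(f)$ already yield $\sharp H(p,n,\vep)\gtrsim e^{n\,h(f|_{\Lambda_k})}$ for $n$ large enough (this is what drives Theorem \ref{theorem a}); to get $\sharp H(p,n,\vep)/e^{nh_\mu(f)}$ bounded below along a subsequence, one must balance the rate of convergence $h(f|_{\Lambda_k})\nearrow h_\mu(f)$ against the threshold time at which the periodic-point asymptotics on $\Lambda_k$ kick in.

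The main steps, in order, are the following. (i) Fix a Pesin hyperbolic block $R$ (with $\mu(R)>0$) having uniform stable/unstable manifold size, uniform angles, and uniform Pesin constants, and construct all subsequent horseshoes inside $R$, so that shadowing, $\lambda$-lemma, and docking constants are independent of $k$. (ii) For each $k\in\NN$, produce a basic set $\Lambda_k\subset R$ homoclinically related to $p$ with $h(f|_{\Lambda_k})\ge h_\mu(f)-1/k$, via the same Katok construction used for Theorem \ref{theorem a}. (iii) Each $\Lambda_k$ is conjugate to a mixing subshift of finite type, so $\sharp\Per_n(\Lambda_k)\ge c\,e^{n\,h(f|_{\Lambda_k})}$ for all $n\ge N_k$, with $c>0$ uniform in $k$ (this is where the common block $R$ is essential); via the $\lambda$-lemma each periodic orbit in $\Lambda_k$ produces a transverse homoclinic point of $p$ contributing to $H(p,n,\vep)$, yielding $\sharp H(p,n,\vep)\ge C^{-1}\sharp\Per_n(\Lambda_k)$ for a uniform $C$. (iv) Choose $n_k$ with $N_k\le n_k\le k$ to obtain
$$\frac{\sharp H(p,n_k,\vep)}{e^{n_k h_\mu(f)}}\;\ge\;\frac{c}{C}\,e^{-n_k(h_\mu(f)-h(f|_{\Lambda_k}))}\;\ge\;\frac{c}{Ce},$$
and the conclusion $\limsup>0$ follows.

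The main obstacle is precisely the uniform control of the constant $c$ and the threshold $N_k$ as $h(f|_{\Lambda_k})\to h_\mu(f)$. Generic Katok horseshoes approaching the full entropy have shrinking spectral gap and increasingly complex transition combinatorics, which tends to push $c\to 0$ and $N_k\to\infty$; if $N_k$ grows faster than linearly in $k$, the budget $n_k\le k$ collides with $n_k\ge N_k$ and the balance breaks. Here the MME hypothesis plays its essential role: using the symbolic machinery of Buzzi--Crovisier--Sarig \cite{BCS}, one expects $\mu$ to admit a positive-recurrent coding by a countable Markov shift satisfying a BIP-type property, from which the horseshoes $\Lambda_k$ can be extracted as finite sub-Markov models inheriting uniformly controlled mixing constants and uniformly bounded threshold times. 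Establishing this quantitative control, within a single Pesin block, is the crux of the argument; once it is in hand, the counting step (iv) is a direct combinatorial consequence.
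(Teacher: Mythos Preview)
Your proposal has a genuine gap at exactly the point you identify as the ``main obstacle,'' and the remedy you suggest does not close it.

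The Katok horseshoe $\Lambda_k$ with entropy $\ge h_\mu(f)-1/k$ is built (as in the proof of Theorem~\ref{theorem a}) as a full shift for an iterate $f^{\ell_k}$, where $\ell_k$ is a return time to the Pesin block chosen large enough that an $(n,\varepsilon)$-separated set of cardinality $\ge e^{n(h_\mu(f)-2/k)}$ exists for some $n\sim\ell_k$. The point is that the minimal period of \emph{any} periodic orbit in $\Lambda_k$ (for $f$) is $\ell_k$, so necessarily $N_k\ge\ell_k$. Your balancing step~(iv) requires $N_k\le k$, hence $\ell_k\le k$; equivalently, you need the product $\ell_k\cdot(h_\mu(f)-h(f|_{\Lambda_k}))$ to be bounded. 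But the rate at which Katok's entropy formula converges is not controlled by the MME hypothesis: there is no a~priori reason why a return time $\ell_k$ of order $k$ suffices to capture entropy within $1/k$. Working in a fixed Pesin block $R$ gives uniform \emph{geometric} constants (manifold sizes, angles, docking time for the $\lambda$-lemma), but this has nothing to do with the \emph{combinatorial} quantity $\ell_k$, which is governed by the approximation rate in Katok's formula.

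Your fallback---extract the $\Lambda_k$ as finite sub-Markov models of a BCS-type countable coding with uniformly controlled mixing---essentially abandons the horseshoe route and collapses to what the paper actually does. The paper works directly on the Sarig--Ben Ovadia countable Markov shift $(\Sigma,\sigma)$ coding $\mu$: the existence of an MME forces (by Gurevi\v{c}'s theorem) the \emph{sharp} bound $\sharp P_n(w_0)\ge C_0^{-1}e^{nh_\mu(f)}$ along an arithmetic progression, with a fixed constant $C_0$. Each periodic word through $w_0$ is then spliced between two fixed ``docking'' words to produce a symbolic homoclinic sequence for $\underline{p}$; the factor map $\pi$ is H\"older and boundedly finite-to-one, so these project to genuine transverse homoclinic points of $p$ with controlled $\varepsilon$-order. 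This bypasses the horseshoe approximation entirely: the exp-type bound is inherited directly from Gurevi\v{c}'s periodic-orbit asymptotics on $\Sigma$, not reconstructed from a sequence of finite models. If you invoke the symbolic coding at all, there is no reason to pass through $\Lambda_k$.
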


By a result of Newhouse \cite{N1}, $C^{\infty}$ diffeomorphisms on a compact manifold admit measures of maximal entropy.
In particular, for surface diffeomorphisms with positive topological entropy,  the measures of maximal entropy  are necessarily hyperbolic.  Therefore, we have: 
\begin{cor}
	Let $f$ be a $C^{\infty}$ diffeomorphism on a compact surface $M$ with $h_{\top}(f)>0$. 
	Then there exists a hyperbolic point $p$ and small $\varepsilon>0$,
	$$
	\underset{n\to\infty }{\limsup}\frac{\sharp H(p,n,\varepsilon)}{e^{nh_{\top}(f)}} >0.
	$$
\end{cor}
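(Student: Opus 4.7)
The plan is to reduce the corollary to Theorem \ref{theorem b} by verifying its two structural hypotheses: the existence of a hyperbolic ergodic measure of maximal entropy on a surface with $h_{\top}(f)>0$, and the existence of a hyperbolic periodic point homoclinically related to it.

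First I would invoke Newhouse's theorem \cite{N1}, which guarantees that every $C^{\infty}$ diffeomorphism on a compact manifold admits a measure of maximal entropy $\mu_0$. Since the metric entropy functional is affine with respect to the ergodic decomposition, $\mu_0$-almost every ergodic component has the same entropy $h_{\top}(f)$; choosing one such component yields an ergodic measure $\mu$ of maximal entropy. Next I would show that $\mu$ is hyperbolic. Because $\dim M=2$, Oseledec's theorem produces two Lyapunov exponents $\lambda_1(\mu)\ge\lambda_2(\mu)$. Ruelle's inequality applied to $f$ gives $h_{\mu}(f)\le\max\{\lambda_1(\mu),0\}$, so from $h_{\mu}(f)=h_{\top}(f)>0$ we conclude $\lambda_1(\mu)>0$; applying Ruelle's inequality to $f^{-1}$ yields $\lambda_2(\mu)<0$. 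Thus $\mu$ is hyperbolic.

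Next I would produce a hyperbolic periodic point $p$ homoclinically related to $\mu$. This follows from Katok's approximation theorem for ergodic hyperbolic measures of positive entropy: $\mu$ is approximated (in the weak-$*$ topology, together with entropy) by measures supported on horseshoes, so in particular any periodic orbit lying in such a horseshoe is hyperbolic and homoclinically related to $\mu$ in the sense used by the paper (the stable/unstable manifolds of $p$ intersect transversally those of a positive-$\mu$-measure set of Pesin blocks). With such a $p$ in hand, Theorem \ref{theorem b} applies directly to the pair $(\mu,p)$, and since $h_{\mu}(f)=h_{\top}(f)$ we obtain, for suitably small $\varepsilon>0$,
$$\limsup_{n\to\infty}\frac{\sharp H(p,n,\varepsilon)}{e^{n h_{\top}(f)}}=\limsup_{n\to\infty}\frac{\sharp H(p,n,\varepsilon)}{e^{n h_{\mu}(f)}}>0.$$

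The conceptual content lies entirely in the three classical inputs (Newhouse existence, Ruelle's inequality on surfaces, and Katok's horseshoe approximation), each of which is standard; the main potential pitfall is the last one, namely ensuring that the periodic point extracted from an approximating horseshoe is genuinely homoclinically related to $\mu$ in the precise sense required by Theorem \ref{theorem b}, rather than merely approximating $\mu$. This is handled by noting that the horseshoes in Katok's construction are obtained from Pesin blocks of $\mu$, so their periodic points inherit the required transverse intersection with the hyperbolic structure of $\mu$.
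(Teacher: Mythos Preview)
Your proposal is correct and follows essentially the same reduction as the paper: verify that a $C^\infty$ surface diffeomorphism with positive topological entropy admits a hyperbolic ergodic measure of maximal entropy, produce a hyperbolic periodic point homoclinically related to it, and invoke Theorem~\ref{theorem b}. The paper's own proof is terser---it takes the hyperbolicity of the maximal-entropy measure as already noted in the introduction (via Ruelle's inequality, exactly as you argue) and obtains $p$ directly as the $\pi$-image of a periodic point in the Sarig/Ben~Ovadia symbolic extension constructed in the proof of Theorem~\ref{theorem b}, rather than via Katok's horseshoe approximation---but the logical structure is identical.
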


The above results provide lower-bound estimates for the growth rate of transverse homoclinic points. 
Naturally, we would like to know what happens with its upper-bound estimates.  
We will show that in general the upper bound may be super-exponential.  
This leads us to the analysis
of dynamics in the Newhouse domain. 

\noindent \textbf{Newhouse domain}
As previously stated, transverse homoclinic points contribute to chaos. 
Homoclinic tangencies, on the other hand, may trigger bifurcations or parameter-sensitive phenomena. 
Homoclinic tangencies undermine structural stability, causing drastic changes in dynamical behavior under small parameter variations. 
For example, a horseshoe may emerge near a tangency point.   

Given $2\le r \le \infty$, let $C^{r}(M)$ denote the space of $C^{r}$ maps on a compact manifold $M$ with the uniform $C^{r}$-topology, and let $\Diff^{r}(M)$ be the space of the $C^{r}$ diffeomorphisms with the same topology.
For a compact surface $M$, the \textit{Newhouse domain}  \cite{N} is defined as:
$$
\mathcal{N}^r=\Int \overline{(\left \{ f\in \Diff^{r}(M) \text{ with homoclinic tangencies} \right \} )}, \, \quad r\ge 2.$$
Palis \cite{Pa1} conjectured that the space of two-dimensional $C^{r}$ diffeomorphisms
with $r \ge 2$ is the closure of the union of just two open sets: axiom A systems, and the Newhouse domain. 
The Newhouse domain has been applied to estimate the upper bounds of the super-exponential growth of periodic points on two-dimensional manifolds \cite{Kal}.

Artin and Mazur \cite{AM} showed that there exists a dense set in $C^{r}(M)$ where the number of period-$n$ points grows at most exponentially.
On the contrary, in a generic sense (i.e., in a countable intersection of open and dense sets),  
Kaloshin \cite{Kal} showed that the number of periodic points grows faster than any preassigned rate 
 in the Newhouse domain.
Berger \cite{Be} showed for any $2\le r \le \infty$ and the dimension of $M$ at least $2$, there exists an open set $U$ of $C^{r}$ self-mapping such that for a generic map $f$ from $U$ the number of periodic points also grows as fast as asked. 

In this paper,  we establish the fast growth result for homoclinic points.

\begin{theorem}\label{theorem c}

Let $\mathcal{N} \subset \Diff^{r}(M)$ be the  Newhouse domain with a compact surface $M$ and $2\le r \le \infty$. 
Then for an arbitrary sequence of positive integers $\left \{ a_{n} \right \} _{n=1}^{\infty } $  there exists a residual set $\mathcal{R} _{a}$ depending on the sequence $\left \{ a_{n} \right \} _{n=1}^{\infty } $ with the property that $f\in \mathcal{R} _{a}$ implies that 
$$
\limsup_{n\to \infty } \frac{1}{a_{n}} \sharp H(p,n,\varepsilon) = \infty,
$$
for some hyperbolic periodic point $p$ of $f$ and small $\vep>0$.
\end{theorem}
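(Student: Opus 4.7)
The plan is a Baire category argument in $\mathcal{N}$, modelled on Kaloshin's treatment \cite{Kal} of super-exponential periodic point growth but with the counting function $\sharp H(p,n,\vep)$ in place of the periodic point count. For each $N\in \NN$ I set
\[
\mathcal{U}_N=\bigl\{f\in \mathcal{N}\,:\,\exists\, n\ge N,\,\exists\,\text{hyperbolic periodic } p,\,\exists\,\vep>0,\ \sharp H(p,n,\vep)>N a_n\bigr\},
\]
and define $\mathcal{R}_a=\bigcap_{N\ge 1}\mathcal{U}_N$. For any $f\in \mathcal{R}_a$ there exist, for each $N$, data $(n_N,p_N,\vep_N)$ with $n_N\ge N$ and $\sharp H(p_N,n_N,\vep_N)/a_{n_N}>N$. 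Since $f$ has only countably many hyperbolic periodic orbits and one may restrict to $\vep_N\in \{1/k:k\in \NN\}$, a pigeonhole argument extracts a single pair $(p,\vep)$ for which $\sharp H(p,n,\vep)/a_n\to \infty$ along a subsequence $n\to \infty$, which is the stated conclusion.

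Openness of $\mathcal{U}_N$ comes from the lower semicontinuity of $f\mapsto \sharp H(p(f),n,\vep)$ near any $f$ admitting a hyperbolic continuation $p(f)$: the local stable and unstable manifolds of $p(f)$ vary $C^r$-continuously with $f$, and transverse intersections persist under $C^r$-small perturbations by the implicit function theorem (for generic $\vep$, avoiding the at most countably many values where some homoclinic point lies on the boundary of the $\vep$-neighborhood at some iterate). Density is the core of the argument. Fix $f\in \mathcal{N}$ and an arbitrarily small $C^r$-neighborhood $V\subset \mathcal{N}$. By the definition of the Newhouse domain, diffeomorphisms with a homoclinic tangency are $C^r$-dense in $V$; choose such a $g_0\in V$ with a quadratic tangency at a point $q$ between the invariant manifolds of a hyperbolic periodic saddle $p_0$, and fix a small box $B\ni q$ disjoint from $\orb(p_0)$. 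By the Newhouse unfolding analysis (refined by Gonchenko, Shilnikov and Turaev), an arbitrarily $C^r$-small perturbation $g\in V$ of $g_0$ replaces the tangency by a transverse horseshoe $\Lambda\subset B$ consisting of homoclinic points of $p_0(g)$, with a prescribed number $K$ of distinct branches and prescribed first-return time $n$ to a fixed neighborhood of $p_0(g)$. Arranging $K>N a_n$ for some $n\ge N$ and taking $\vep$ smaller than the distance from $B$ to $\orb(p_0(g))$ ensures that every branch of $\Lambda$ is an $\vep$-order-$n$ homoclinic point, so $g\in \mathcal{U}_N$.

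The main obstacle is precisely this quantitative unfolding step: I must simultaneously control the branch count $K$, the return time $n\ge N$, and the $\vep$-order. The technical input is Newhouse's persistent tangency mechanism, which in $\mathcal{N}$ supplies saddles whose stable and unstable Cantor sets have thickness product exceeding one, so that arbitrarily many tangencies may be unfolded within a single $C^r$-neighborhood. Gonchenko-Shilnikov-Turaev renormalization then realizes high-iterate first-return maps near each tangency as H\'enon-like families, whose parameters are tuned to prescribe $K$ and $n$. Because the target count $N a_n$ need only be attained at one $n\ge N$ of my choosing, this step is considerably milder than in the periodic-point setting of \cite{Kal}. With it in hand, Baire's theorem yields that $\mathcal{R}_a$ is residual in $\mathcal{N}$, completing the proof.
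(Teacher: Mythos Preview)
Your Baire-category skeleton and the openness argument for $\mathcal{U}_N$ are fine and match the paper's Step~5. The gap is in the density step, specifically in the claim that a $C^r$-small perturbation of a quadratic tangency can be arranged to produce $K>Na_n$ transverse homoclinic points \emph{of the same $\vep$-order $n$}. From a single non-degenerate tangency this is impossible: locally $W^s=\{y=0\}$, $W^u=\{y=x^2\}$, and a $C^r$-small perturbation (with $r\ge 2$) moves the difference to $x^2+\phi(x)$ with $\|\phi\|_{C^2}$ small, hence the second derivative stays positive and the function is convex, giving at most two zeros. The mechanisms you invoke do not rescue this. The Gonchenko--Shilnikov--Turaev renormalization produces H\'enon-like return maps whose periodic points are periodic points of $f$, not homoclinic points of $p_0$; and the secondary intersections of $W^u(p_0)$ with $W^s(p_0)$ created by iterating through the resulting horseshoe have orders that \emph{increase} with each pass, so for any fixed order you still only get $O(1)$ of them, and over all orders you get at most exponential growth---useless against a super-exponential $a_n$. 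Likewise, the persistent tangencies coming from thick invariant Cantor sets occur at different points of the invariant manifolds and hence at different $\vep$-orders; there is no reason many of them should share a common order.

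The paper's proof supplies exactly the missing idea. It first invokes Kaloshin's construction (Proposition~5 and Lemma~3 of \cite{Kal}) to perturb so that an entire \emph{interval} $\gamma$ of tangencies exists between $W^u(q)$ and $W^s(q)$ for an auxiliary periodic point $q$ homoclinically related to $p$; it then transports this, via a further local perturbation, to an interval $\gamma_p$ of tangencies between $W^u(p)$ and $W^s(p)$, arranged so that every point of a subinterval $\gamma_i\subset\gamma_p$ has the \emph{same} $\hat\vep$-order $n_i$ (with $n_i$ as large as desired). Finally a bump-function perturbation modelled on $x^{2r+1}\sin(1/x)$, supported in a small ball around the center of $\gamma_i$, turns this flat contact into infinitely many transverse intersections, all of order $n_i$. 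That is what makes $\sharp H(p,n_i,\hat\vep)\ge n_i a_{n_i}$ achievable by an arbitrarily $C^r$-small perturbation. Your sketch is missing this ``interval of tangencies'' step, and without it the density argument does not go through.
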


\begin{remark} In \cite{Kal, Be}, the super-exponential growth of periodic points is established. These points are located on the center manifold and exhibit Lyapunov exponents close to zero, and are therefore not homoclinically related to one another. A key challenge in Theorem \ref{theorem c}, however, is the selection of homoclinic points that not only exhibit a fast growth rate, but are also homoclinically related to the same periodic orbit. This contributes a new complicated phenomenena in Newhouse domain.

\end{remark} 

\bigskip

\noindent\textbf{More on the counting problem.} In addition to homoclinic orbits, 
periodic orbits are another class of special orbits that are widely studied. 
Research on counting periodic orbits provides inspiration for counting transverse homoclinic orbits. 
 
Let $f:M\to M$ be a diffeomorphism on a compact Riemannian manifold $M$, and define 
$$
P_{n}(f):=\left\{x\in M : x=f^{n}(x) \right\}.
$$
Let $ \mathcal{M} ^{*}(f)$ denote the set of all hyperbolic ergodic invariant measures.

Bowen's definition of topological entropy suggests a relationship with the growth rate of periodic orbits.
When $f$ is axiom A diffeomorphism, 
$$
h_{top}(f) =\underset{n\to\infty}{\lim \sup}\frac{\log \sharp P_{n}(f)}{n},
$$
see \cite{Bo}.
Katok \cite{Kat} proved that if $f$ is $C^{r}$ ($r>1$) diffeomorphism on a compact surface with $h(f)>0$,
$$ 
\underset{n\to\infty}{\lim \sup}\frac{\log \sharp P_{n}(f)}{n}\ge h_{top}(f),
$$
and more generally, if $f$ is $C^{r}$ ($r>1$) diffeomorphism and $\mu \in \mathcal{M} ^{*}(f)$,
$$ 
\underset{n\to\infty}{\lim \sup}\frac{\log \sharp P_{n}(f)}{n}\ge h_{\mu}(f).
$$

Katok \cite{K1} conjectured the  following exp-type  growth of periodic orbits for surface diffeomorphisms. 
\begin{Con}\label{Katok}Is it true that
	\begin{align}\label{1}
	\underset{n\to +\infty }{\limsup}\frac{\sharp P_{n}(f)}{e^{nh_{top}(f)}}>0.
	\end{align}
	for any (i) $C^{r}$ ($r>1$)
	or (ii) $C^{\infty}$ 
	diffeomorphism $f$ of a compact surface?
\end{Con}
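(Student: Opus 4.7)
The plan is to propose an approach to Katok's long-standing conjecture by leveraging the exp-type homoclinic estimate of Theorem \ref{theorem b}, and converting the homoclinic count into a periodic count via a Birkhoff--Smale shadowing construction. I emphasize that the conjecture is genuinely open; the route below identifies the precise step where new ideas would still be required.

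First, I would reduce to the setting of a hyperbolic measure of maximal entropy. In case (ii), $C^\infty$ diffeomorphisms with $h_{top}(f) > 0$, Newhouse's theorem yields an MME $\mu$, which on a surface is necessarily hyperbolic (a fact the paper itself records just after Theorem \ref{theorem b}). In case (i), $C^r$ diffeomorphisms with $r > 1$, an MME may fail to exist, so I would replace it by a sequence of ergodic hyperbolic measures $\mu_k$ with $h_{\mu_k}(f) \to h_{top}(f)$ supplied by Katok's horseshoe theorem, and argue with each $\mu_k$ in turn, passing to the limit at the end.

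Second, with $\mu$ in hand Theorem \ref{theorem b} (or Theorem \ref{theorem a} for case (i)) furnishes a hyperbolic periodic point $p$ homoclinically related to $\mu$ and a small $\varepsilon > 0$ with
$$\limsup_{n \to \infty} \frac{\sharp H(p, n, \varepsilon)}{e^{n h_{top}(f)}} > 0.$$
I would then build a ``closing map'' $\Phi_n : H(p, n, \varepsilon) \to \bigcup_{j=0}^{C} P_{n+j}(f)$, with $C$ depending only on $p$ and $\varepsilon$, that is uniformly $K$-to-one with $K$ independent of $n$. The construction is the classical one: each $q \in H(p, n, \varepsilon)$ lies in $W^s(\orb(p)) \cap W^u(\orb(p))$, its orbit segment $q, f(q), \ldots, f^n(q)$ is $\varepsilon$-separated from $\orb(p)$ in between, and closing it up with a short arc along $\orb(p)$ yields a closed pseudo-orbit that is shadowed by a genuine periodic point inside the Smale horseshoe attached to the transverse intersection of $W^s(\orb(p))$ and $W^u(\orb(p))$. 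The $K$-to-one property then gives $\sum_{j=0}^{C} \sharp P_{n+j}(f) \ge \sharp H(p,n,\varepsilon)/K$, and a pigeonhole over $j \in \{0, 1, \ldots, C\}$ extracts, for each relevant $n$, an integer $m \in [n, n+C]$ with $\sharp P_m(f) \ge c\, e^{n h_{top}(f)}/((C+1) K)$; absorbing the bounded loss $e^{-C h_{top}(f)}$ into a positive constant yields $\limsup_m \sharp P_m(f)/e^{m h_{top}(f)} > 0$, as desired.

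The principal obstacle is the uniformity of the closing map $\Phi_n$ in $n$. The homoclinic points delivered by Theorem \ref{theorem b} are produced through Pesin-theoretic arguments and live in horseshoes whose hyperbolicity and transversality constants come from a Pesin block of positive but potentially small measure. Ensuring that the multiplicity $K$ above does not grow with $n$ requires all the orbit segments $q, f(q), \ldots, f^n(q)$ for $q \in H(p,n,\varepsilon)$ to stay in a single uniform hyperbolic regime, which is exactly what Pesin theory does not supply for free as the orbit length increases. A complete proof of Katok's conjecture along these lines would need to refine Theorem \ref{theorem b} so that the relevant homoclinic points are produced inside a fixed uniformly hyperbolic horseshoe, and this uniformization appears to require new input beyond the current proof strategy; it is precisely the step that keeps Katok's conjecture open even in the presence of the sharp homoclinic estimate of Theorem \ref{theorem b}.
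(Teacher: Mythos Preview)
The statement is labeled a conjecture, and the paper does not prove it; it records that case~(ii) was settled by Sarig~\cite{S} via his countable Markov coding, while case~(i) remains open. So there is no paper proof to compare against, and your caveat that the question is open is appropriate.

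For case~(ii), however, your route is circular. Inspect the paper's proof of Theorem~\ref{theorem b}: the exp-type homoclinic bound is derived \emph{from} Gurevi$\check{c}$'s periodic-point estimate $C_0^{-1}\le \sharp P_n(w_0)\,e^{-nh_{\max}(\Sigma)}\le C_0$ in the symbolic shift, by concatenating periodic words into homoclinic sequences and projecting via the finite-to-one coding $\pi$. The periodic growth you want is thus the \emph{input}, not the output, of Theorem~\ref{theorem b}; Sarig's coding already delivers $\limsup_n \sharp P_n(f)/e^{nh_{top}(f)}>0$ directly, without any detour through homoclinic points. The uniformity obstacle you isolate is precisely what the finite-to-one property of $\pi$ handles.

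For case~(i), there is a more basic gap than the uniformity of the closing map. Theorem~\ref{theorem a} gives only the log-type bound $\limsup_n \tfrac{1}{n}\log\sharp H(p,n,\varepsilon)\ge h_{\mu_k}(f)$, i.e.\ $\sharp H(p,n,\varepsilon)\ge e^{n(h_{\mu_k}(f)-\delta)}$ along a subsequence of $n$'s that depends on $k$ and $\delta$. Sending $k\to\infty$ and $\delta\to 0$ does not yield any single subsequence with $\sharp H(p,n,\varepsilon)\ge c\,e^{nh_{top}(f)}$: the log-type lower bound is strictly weaker than the exp-type one, and the diagonal extraction fails. Since Theorem~\ref{theorem b} requires a hyperbolic measure of maximal entropy, which need not exist for $C^r$ with finite $r$, the obstruction to case~(i) lies already at the level of obtaining an exp-type growth estimate, before any closing argument is attempted.
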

In \cite{S}, Sarig proved that for any $C^{r}$ ($r>1$) surface diffeomorphism with positive entropy, there exists a countable Markov coding and further used it to show Conjecture \ref{Katok} for $C^{\infty}$ 
	surface diffeomorphisms (or more generally, $C^{r}$ surface diffeomorphisms with a measure of maximal entropy).
Ben Ovadia \cite{SBO} later  generalized (\ref{1}) to high diemsnion by considering diffeomorphisms with a hyperbolic measure of maximal entropy.
Inspired by this, we study the relationship between topological entropy and transverse homoclinic points in Theorem \ref{theorem b}, which establishes a homoclinic version of Conjecture \ref{Katok}.

What's more, we would like to mention the upper bound estimate or equality for the growth rate of periodic orbits.  For prime orbits, define: 
$$
P_{n}^{*}(f):=\left\{\Orb(x) : x=f^{m}(x),\,\, m\le n \right\}.
$$
By results of Margulis\cite{MG}, and Parry-Pollicot\cite{PM},  for mixing axiom A systems:
\begin{eqnarray}\label{pri}
    \underset{n\to\infty}{\lim }\frac{\sharp P^{*}_{n}(f)}{e^{nh_{top}(f)}/nh_{top}(f)}=1.
\end{eqnarray}
This equality reflects the correspondence between entropy and the exp-type  growth of periodic orbits.
It originates from the famous Prime Number Theorem in number theory: 
$$
\underset{n\to\infty}{\lim }\frac{\pi(n)}{n/ \log n}=1,
$$
where $\pi(n)$ is the number of primes $\le n$.
Prime numbers are the atoms of natural numbers.
By the Closing Lemma and specification, general orbits can be understood through the approximation and combination of periodic orbits, which can thus be regarded as the ``dynamical atoms" of dynamical systems.
Both are indecomposable basic units in their respective fields.
Thus, equality (\ref{pri}) is also called the Prime orbit Theorem.

In the setting of non-uniformly hyperbolic systems, Hirayama \cite{CH} considered, for a $C^{r}$ ($r>1$) surface diffeomorphism $f$ and numbers $\eta$, $\lambda>0$, the set $P_{n}(\eta,\lambda)$ of all period $n$ points satisfying $\left \| Df^{\pm k} (f^{i}p) \right \| \ge\eta e^{k\lambda } $ for all $j\ge 0$ and $0\le i\le n-1$. 
He showed that the topological entropy equals the exponential growth rate of the cardinality of $P_{n}(\eta,\lambda)$ for some $\eta, \lambda$.
Newhouse \cite{N1} proposed hyperbolic rate to give the log-type equality for  hyperbolic  measure of maximal entropy. Liao et al. \cite{LST, LSY} further generalized it to any ergodic and non-ergodic hyperbolic measures.   Burguet \cite{Bur} considered $C^{\infty}$ surface diffeomorphisms  and for any $\delta \in (0,h_{top}(f))$, established   the log-type equality for periodic points with Lyapunov exponents $\delta$-away from zero by showing asymptotical  periodic expansiveness.    

This paper is organized as follows. 
Section \ref{sec:pre} introduces the necessary definitions. 
In Section \ref{log} we show the log-type estimate for any ergodic hyperbolic measure:  we first provide preliminaries for Theorem \ref{theorem a}.
In Section \ref{hc}, 
for any periodic point $p$ homoclinically related to $\mu$,
we choose a periodic point $q$ in a horseshoe homoclinically related to $p$, and use order-$n$ homoclinic points of $q$ to count transverse homoclinic points of $p$. In Section \ref{exp}, we show the exp-type estimate for ergodic hyperbolic measure of maximal entropy: 
 Section \ref{exp1}  presents preliminaries for Theorem \ref{theorem b}. 
The proof of Theorem \ref{theorem b}  is no longer to directly study the transverse homoclinic points on the manifold $M$, but to analyze the transverse homoclinic points in symbolic space.
In Section \ref{log} we show the super-exponential estimate  in Newhouse domain:  we use another periodic point $q$ rather than the initial one  $p$ but homoclinically related to $p$ to obtain an interval of tangencies,
which is then perturbed to generate infinitely many transverse homoclinic points of arbitrarily large order.

\section{Preliminaries}\label{sec:pre}

\subsection{Homoclinic relation and Homoclinic point of order-$n$}

 Let $M$ be a compact Riemannian manifold and let $f:M\to M$ be a diffeomorphism. Given a hyperbolic periodic point $p$,  
the \textit{ local stable (unstable) manifold } of size $\varepsilon$ is defined as 
$$
W_{\varepsilon}^{s}(p)=\left\{ x\in M:d(f^{n}x,f^{n}p) \le \varepsilon,\forall n\ge0\right\},
$$
$$
W_{\varepsilon}^{u}(p)=\left\{ x\in M:d(f^{-n}x,f^{-n}p)\le \varepsilon,\forall n\ge0\right\}.
$$
The \textit{stable (unstable) manifold} is defined as
$$
W^{s}(p)=\left\{ x\in M:d(f^{n}x,f^{n}p)\to 0, n\to +\infty \right\},
$$
$$
W^{u}(p)=\left\{ x\in M:d(f^{-n}x,f^{-n}p)\to 0, n\to +\infty \right\}.
$$
A point $x\in W^{s}(p)\cap W^{u}(p)\setminus \left\{p\right\}$ is called a homoclinic point for $p$. 
The point $x$ is called a \textit{transverse homoclinic point}, if the intersection of $W^{s}(p)$ and $W^{u}(p)$ is transverse at $x$, i.e.
$$
T_{x} M=T_{x}W^{s} (p)\oplus T_{x}W^{u} (p).
$$
\begin{figure}[h]
	\centering
	\includegraphics[width=0.3\linewidth]{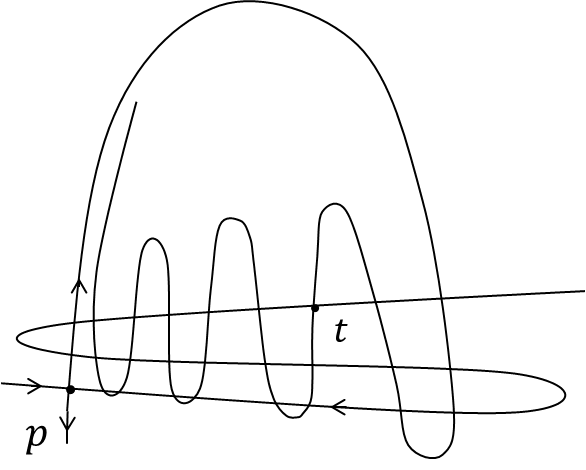}
	\caption{The transversely homoclinic point $t$ of periodic point $p$. }
	\label{TH}
\end{figure}
Denote 
$$
\Gamma (p):=\left\{ x\in M:\text{ $x$ is a transverse  homoclinic point for $p$} \right\}.
$$

For small $\varepsilon>0$, if $\Gamma(p)\ne \emptyset$, for $x\in \Gamma(p)$,  let 
\begin{align}
\Theta ^{s}(x,p,\varepsilon)={}&\min \left\{n:f^{n}(x)\in W_{\varepsilon}^{s}(p) \right\},\notag\\[2mm]
\Theta ^{u}(x,p,\varepsilon)={}&\min \left\{n:f^{-n}(x)\in W_{\varepsilon}^{u}(p)\right\},\notag\\[2mm]
\Theta (x,p,\varepsilon)={}& \Theta ^{s}(x,p,\varepsilon)+\Theta ^{u}(x,p,\varepsilon),\notag 
\end{align}
A point $x\in \Gamma(p)$ is called  \textit{$\varepsilon$-order  $n$} if  $$ \Theta ^{s}(x,p,\varepsilon)+\Theta ^{u}(x,p,\varepsilon) =n.$$
\begin{figure}[h]
	\centering
	\includegraphics[width=0.3\linewidth]{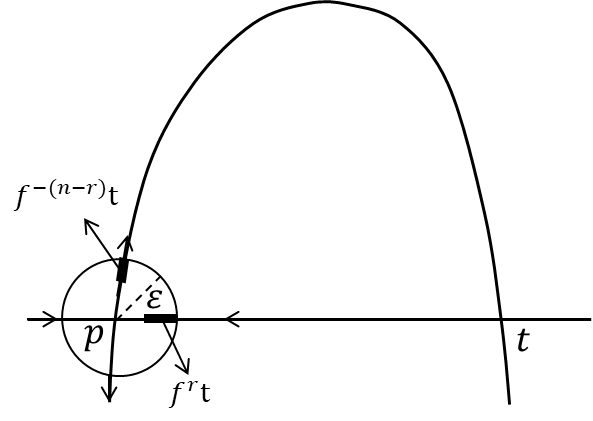}
	\caption{Homoclinic point $f^{-(n-r)}t$ of $\varepsilon$ order-$n$ of the periodic point $p$}
	\label{order}
\end{figure}
It is clear that if $\Theta ^{s}(x,p,\varepsilon)=n_{1}$ and $\Theta ^{u}(x,p,\varepsilon)=n_{2}$, these imply that the point $x$ is outside a neighborhood of size $\varepsilon$ of the periodic point $p$ in $W^{u}(p)$ in the past $n_{2}$ time and,  outside a neighborhood of size $\varepsilon$ of the periodic point $p$ in $W^{s}(p)$ in the future $n_{1}$ time. 
Let 
$$
H(p,n,\varepsilon):=\left\{x\in \Gamma(p):\Theta ^{s}(x,p,\varepsilon)= n,\Theta ^{u}(x,p,\varepsilon)=0\right\}.
$$ 
This is a set of the points of the $\varepsilon$-order $n$.
See Fig.\ref{order}.

We say that two hyperbolic periodic points $p_{1},p_{2}$ are  \textit{homoclinically related}, if $W^{s}(p_{1}) $ intersects $W^{u}(p_{2}) $ transversally and $W^{u}(p_{1}) $ intersects $W^{s}(p_{2}) $ transversally,
write $p_{1}\overset{h}{\sim } p_{2}$.
It is clear that this is an equivalence relation.

From \cite{BCS}, the notion of homoclinic relation extends to measures. Denote by  $\mathcal{M} ^{*}(f)$ the set of hyperbolic ergodic measures.  Let $f$ be a $C^{1+\alpha}$ diffeomorphism, by Pesin's stable manifold theorem, for $\mu \in \mathcal{M} ^{*}(f)$,  almost every $x\in M$, $x$ has stable and unstable manifold. 
For $\mu_{1},\mu_{2}\in \mathcal{M} ^{*}(f)$, $\mu_{1},\mu_{2}$ are \textit{homoclinically related}, if there are measurable sets $A_{1},A_{2}\subset M$ with $\mu_{i}(A_{i})>0$ such that for all $(x_{1},x_{2})\in A_{1}\times A_{2}$, the manifold $W^{u}(x_{1})$ and $W^{s}(x_{2})$ have a point of transverse intersection, and there are measurable sets $B_{1},B_{2}\subset M$ with $\mu_{i}(B_{i})>0$ such that for all $(y_{1},y_{2})\in B_{1}\times B_{2}$, the manifold $W^{s}(y_{1})$ and $W^{u}(y_{2})$ have a point of transverse intersection.
We write $\mu_{1}\overset{h}{\sim } \mu_{2}$. 
The homoclinic relation of measures is an equivalence relation.
We write the sets of hyperbolic periodic orbits as 
$$
P_{h}(f)=\left\{ \Orb( x) \subset M: x \text{ is a hyperbolic periodic point} \right\}.
$$
Each $ o\in P_{h}(f)$ carries a unique invariant ergodic measure $\mu_{o}$.  We  denote  $p\overset{h}{\sim } \mu$ when $\mu_{o}\overset{h}{\sim }\mu$ and  $o$ is the periodic orbit of $p$.

We define the set $W^{u}(o) \pitchfork W^{s}(x)$ ($W^{s}(o) \pitchfork W^{u}(x) $) as the set of transverse intersection points.
For $\mu \in \mathcal{M} ^{*}(f)$, one writes $o\overset{h}{\sim }\mu$ when $\mu_{o}\overset{h}{\sim }\mu$.
This is equivalent to be require both $W^{u}(o) \pitchfork W^{s}(x) \ne \emptyset $ and $W^{s}(o) \pitchfork W^{u}(x) \ne \emptyset $ for $\mu$-almost every $x$. 
We also write $o\overset{h}{\sim } \mu$  as $p\overset{h}{\sim } \mu$, if $o$ is the periodic orbit of $p$.

%\section{Proofs of theorems}\label{sec:pre}
 
\section{log-type growth}\label{log}

We now proceed to the proofs of our main results, beginning with the log-type estimate.
\subsection{Pesin set}
Given $\lambda  \gg \epsilon  >0$, and for $k\ge 1$ with $k\in\mathbb{Z}$, define
$
\Lambda _{k} (\lambda, \epsilon )
$
to be the set of all points $x\in M$ for which there is a splitting 
$
T_{x} M=E^{s} (x)\oplus E^{u} (x)
$
with invariant property 
$
D_{x} f^{t} (E^{s} (x))=E^{s} (f^{m}(x)),D_{x} f^{t} (E^{u} (x))=E^{u} (f^{t}(x))
$
and satisfying:
\begin{itemize}\item[(1)]$\left \| Df^{n} \mid _{E^{s}(f^{t}(x)) }  \right \| \le e^{\epsilon  k} e^{-(\lambda -\epsilon  )n}e^{\epsilon  \left | t \right | },n\ge 1,t\in \mathbb{Z} $;\\
\item[(2)]$\left \| Df^{-n} \mid _{E^{u}(f^{t}(x)) }  \right \| \le e^{\epsilon  k} e^{-(\lambda  -\epsilon  )n}e^{\epsilon  \left | t \right | },n\ge 1,t\in \mathbb{Z}$;\\
\item[(3)]$\left | \sin\angle (E^{s}(f^{t}(x)) ,E^{u}(f^{t}(x)))  \right |  >  e^{-\epsilon  k}e^{-\epsilon  \left | t \right | },t\in \mathbb{Z}$.
\end{itemize}
 Define the \textit{Pesin set} as $\Lambda (\lambda,\epsilon  )=\cup^{+\infty}_{k=1}\Lambda _{k} (\lambda,\epsilon  )$.
\subsection{Admissible manifolds}\cite{Kat, KH, P} 
Let $f:M\to M$ be a $C^{r}$ ($r>1$) diffeomorphism that has the Pesin set $\Lambda (\lambda,\epsilon )=\cup^{+\infty}_{k=1}\Lambda _{k} (\lambda,\epsilon )$.
Let $d$ be the Riemannian metric on $M$.
Assume that $\dim E^s =s$ and $\dim E^u=u$. 
For $(z_1,z_2) \in \mathbb{R}^{s}\times \mathbb{R}^{u}$, we define $|(z_1,z_2)|=\max\{|z_1|_s,|z_2|_u\}$, where $|\cdot|_s$ and $|\cdot|_u$ are the Euclidean norms on the $\mathbb{R}^{s}$ and $\mathbb{R}^{u}$ respectively.

For any $x\in \Lambda_{k}(\lambda,\epsilon )$, there exist $\varepsilon_{k}>0$ and $C^{1}$ embedding
$$
\Psi_{x}:B_{\varepsilon_{k}}^{s}\times B_{\varepsilon_{k}}^{u}\to M,
$$
with the following properties:
\begin{itemize}
    \item[(i)] $ \Psi_{x}0=x$; $D\Psi_{x}(0)$ takes $\mathbb{R}^{s}\times \{0\}$ and $\{0\}\times \mathbb{R}^{u}$ to $E^s(x)$ and $E^u(x)$ respectively;
    \item[(ii)] Let $f_x=\Psi^{-1}_{fx}\circ f \circ \Psi_x$ be the connecting map between the chart at $x$ and the chart at $fx$, defined whenever it makes sense, and let $f^{-1}_x=\Psi^{-1}_{f^{-1}x}\circ f^{-1} \circ \Psi_x$ be defined similarly. Then for $y\in B_{\vep_k}$, 
    \begin{eqnarray*}e^{\lambda -2\epsilon} |v|&\le &|Df_x(y)v|,  \,\, \text{for } v\in \{0\}\times \mathbb{R}^{u},\\
      |Df_x(y)v| &\le & e^{-\lambda +2\epsilon} |v|, \,\, \text{for } v\in  \mathbb{R}^{s} \times \{0\}.\end{eqnarray*}
     \item [(iii)] For all $y_1,y_2\in B_{\varepsilon_k}$, we have 
     $$
     Kd(\Psi_xy_1, \Psi_xy_2) \le |y_1-y_2|\le \varepsilon_k^{-1} d(\Psi_xy_1, \Psi_xy_2),
     $$
     for some universal constant $K>0$.
\end{itemize}
Here,  $B_{\vep_k}$ and $B_{\vep_{k}}^{s/u}$ are Euclidean $\vep_k$-balls around the origin in $\mathbb{R}^{s+u}$ and $\mathbb{R}^{s/u}$.
$\varepsilon_{k}$ decreases as $k$ increases.
We call $R(x,t)=\Psi _{x}(B_{\frac{t\varepsilon_{k}}{2}}^{s}\times B_{\frac{t\varepsilon_{k}}{2}}^{u}) $ \textit{the $t$-regular neighborhood of $x$} with $0< t\le 1$.

For  small $0<\gamma<\frac{1}{2}$, denote 
$$
U_{x}=\{ \Psi _{x}(\graph \phi):\,\,\phi\in C^{1}(B_{t\varepsilon_k}^{u}, B_{t\varepsilon_k}^{s}),\,\,| \phi(0)| \le \frac{t\varepsilon_{k}}{4},\,\,\left\| D \phi\right\| \le \gamma\}
$$
and 
$$
S_{x}=\{ \Psi _{x}(\graph \phi):\,\,\phi\in C^{1}(B_{t\varepsilon_k}^s, B_{t\varepsilon_k}^{u}),\,\,| \phi(0)| \le \frac{t\varepsilon_{k}}{4},\,\,\left\| D \phi\right\| \le \gamma\}.
$$

A manifold of the form $N \cap R(x,t)$ with  $N\in U_{x}$ is called an \textit{admissible $(u,t)$-manifold} near $x$.
Similarly, $N \cap R(x,t)$ with $N\in S_{x}$
is an \textit{admissible $(s,t)$-manifold} near $x$.

For any point $x$ in $\Lambda_{k}(\lambda,\epsilon )$,  we introduce the local unstable manifold at $x$ associated with $\Phi_x$ and $0<t\le 1$. 
It is defined to be the component of $W^u(x) \cap \Phi_x (B_{t\varepsilon_{k}}^{s}\times B_{t\varepsilon_{k}}^{u}) )$ containing $x$.
The $ \Phi_x ^{-1}$-image of the local unstable manifold at $x$ is the graph of a function:
$$\phi^u:B^{u}_{t\vep_k} \to  B^{s}_{t\vep_k}$$
with $\phi^u(0)=0$ and $\|D\phi^u \|\le \gamma$ for small $t$.
Similarly, the local stable manifold at $x$ is defined to be the component of $W^s(x) \cap \Phi_x (B_{t\varepsilon_{k}}^{s}\times B_{t\varepsilon_{k}}^{u} )$ containing $x$.
The $ \Phi_x ^{-1}$-image of the local stable manifold is the graph of a function:
$$\phi^s:B^{s}_{t\vep_k} \to  B^{u}_{t\vep_k}$$
with $\phi^s(0)=0$ and $\|D\phi^s \|\le \gamma$. 

Given $C^1$ map $\phi_1, \phi_2:B_{t\varepsilon_{k}}^{u}\to B_{t\varepsilon_{k}}^{s}$ such that $|\phi_1(v)|>|\phi_2(v)|$ for $|v|<t\vep_k$ and $\|D\phi_i\|\le \gamma$ ($i=1,2$), we define the set 
$$\Phi_x \{(v,u)\in (B_{t\varepsilon_{k}}^{s}\times B_{t\varepsilon_{k}}^{u}):v=\theta\phi_1(u)+(1-\theta)\phi_2(u),\,\,0<\theta\le 1\}$$
is an \textit{admissible $u$-rectangle} in $\Phi_x(B_{t\varepsilon_{k}}^{u}\times B_{t\varepsilon_{k}}^{s})$. 
Similarly define an \textit{admissible $s$-rectangle} in $\Phi_x(B_{t\varepsilon_{k}}^{s}\times B_{t\varepsilon_{k}}^{u})$.

In particular, if $\Lambda \subset M$ is uniformly hyperbolic, then the size of the regular neighborhood can be chosen as a uniform constant for all $x\in \Lambda$.

\subsection{ Closing lemma}(\cite{Kat}, Lemma S.4.13 of \cite{KH})\label{clo}
Let $f:M\to M$ be a $C^{r}$ $(r>1)$ diffeomorphism of a compact $m$-dimensional Riemannian manifold $M$, and $\Lambda (\lambda,\epsilon  )=\cup^{+\infty}_{k=1}\Lambda _{k} (\lambda,\epsilon  )$ be a Pesin set, then for any $k$ and $\varepsilon>0$ there exists a number $\beta=\beta(k,\varepsilon)>0$ such that if for some point $x\in \Lambda_{k}$ and for some integer $n$ one has 
$
f^{n}(x)\in\Lambda_{k}
$ 
and 
$
d(x,f^{n}(x))<\beta,
$
then there exists a periodic point $z=z(x)\in M$ such that 
\\(i) $f^{n}z=z;$
\\(ii) $d(f^{i}(x),f^{i}(z(x)))<\varepsilon$, $i=0,1,...,n-1;$
\\(iii) the point $z$ is a hyperbolic point for $f$, and its local stable manifold (unstable manifold) is an admissible $(s,1)$-manifold ($(u,1)$-manifold) near $x$.

\subsection{ $\lambda$-lemma}(\cite{N3})\label{clo}
Let  $p \in M$  be a hyperbolic fixed point of $f$. 
Then, for any unstable disk $B \subset W^u(p)$, any point $x \in W^s(p)$, any  $\dim W^u(p)$-disk $D$ that intersects $W^s(p)$ transversally at point $x$, and any $\varepsilon > 0$, there exists  $N$  such that if  $n > N $, then  $f^n(D)$  contains a disk that is $C^1 $ $\varepsilon $-close to $ B$.

\subsection{Katok's entropy formula}(\cite{Kat}) 
Recall Katok's definition of metric entropy. 
Let $f:M\to M$ be a diffeomorphism on a compact manifold $M$ and $\mu \in \mathcal{M} ^{*}(f)$.
For $\varepsilon>0$, $n\in\mathbb{N}$, denote
$$
D_{f}(x,n,\varepsilon)=\left\{y\in M:d(f^{i}(x),f^{i}(y))<\varepsilon,i=0,1,...,n-1\right\},
$$
which is called a $(n,\varepsilon)$-Bowen ball.
Given $0<\delta<1$, $W\subset M$ is called a $(n,\varepsilon,\delta)$ cover of $M$, if 
$$
\mu(\underset{x\in W}{\cup}D_{f}(x,n,\varepsilon))>1-\delta.
$$
We write $N_{f}(n,\varepsilon,\delta)=\min \left\{ \sharp W: W \text{ is an $(n,\varepsilon,\delta)$ cover of $M$ }\right\}$, then the metric entropy of $f$ about $\mu $ is 
\begin{align}
h_{\mu}(f)
&={}\underset{\varepsilon\to0}{\lim} \underset{n\to\infty}{\lim \sup}\frac{\log N_{f}(n,\varepsilon,\delta)}{n}\notag\\
&={}\underset{\varepsilon\to0}{\lim} \underset{n\to\infty}{ \lim \inf}\frac{\log N_{f}(n,\varepsilon,\delta)}{n}.\notag
\end{align}

\subsection{Entropy and horseshoes}
A hyperbolic set $\Upsilon$ is called \textit{locally maximal} if it has an open neighborhood $U$ such that $\Upsilon=\underset{n\in Z}{\cap } f^{n}U$.
A \textit{basic set} $\Upsilon$ for $f$ is a compact, transitive, hyperbolic, locally maximal invariant set.
A totally disconnected and infinite basic set is called a \textit{horseshoe}.  A base of a  horseshoe $\Upsilon$ is a compact set $\Upsilon_0$ such that for some $l\ge 1$, 
\begin{itemize} \item $\Upsilon =\Upsilon_0\cup \cdots\cup f^{l-1}(\Upsilon_0),\quad f^l(\Upsilon_0)=\Upsilon_0$;\\
\item $f^l: \Upsilon_0 \to \Upsilon_0$ is mixing. 
\end{itemize}
By  Theorem 3.1 of Mendoza \cite{ML} on mixing basic sets,  it holds that 
 for small $\vep>0$, 
\begin{align}\label{hyp}
h_{top}(f^l|\Upsilon_0)= \underset{n\to \infty}{\lim} \frac{1}{n} \log \sharp H_{f^l}( p,n,\varepsilon )\cap \Upsilon_0,
\end{align}
where $H_{f^l}(p,n,\varepsilon)$ is the set of $\vep$-order $n$ points of $f^l$ with respect to $p$.

\subsection{Proof of Theorem \ref{theorem a}}\label{hc}
\begin{proof}

  \textit{\textbf{Step 1.}
For any $\vartheta >0$, there exists a hyperbolic horseshoe $\Upsilon=\Upsilon_0\cup \cdots\cup f^{l-1}(\Upsilon_0)$ with base $\Upsilon_0$ and $l\ge 1$ such that:}

\begin{itemize}\item
$h_{\mu}(f^l)<h_{top}(f^l|\Upsilon_0 )+l\vartheta$,\\
    \item 
for any $p\overset{h}{\sim }\mu$ and $q\in \Upsilon_0$, we have 
$p\overset{h}{\sim }q$.
\end{itemize}

Let $\kappa(\mu)$ be the minimal number of Lyapunov exponnets of $\mu$ in absolute value. Then, taking   $0<\lambda\le \kappa(\mu)$, we have $\mu(\Lambda(\lambda, \epsilon))=1$ for some Pesin set $\Lambda(\lambda, \epsilon)=\cup_k \Lambda_k(\lambda, \epsilon)$. Take $\Lambda_k(\lambda, \epsilon)$ with positive $\mu$-measure. For a given point $x_{0} \in \supp(\mu |_{\Lambda_{k}(\lambda,\epsilon)})$, take $0<\tau < \frac{t}{4}\vep_k^2$ with $t$ assigned later. 
Let $\delta=1-\frac{1}{4}\mu(B(x_{0},\tau)\cap\Lambda_{k}(\lambda,\epsilon))$, then $\mu(B(x_{0},\tau)\cap\Lambda_{k}(\lambda,\epsilon))>1-\delta$,  where $B(x_{0},\tau)=\left \{y\in M:d(x_{0},y)<\tau \right \}$.  
Define $\triangle _{k}=B(x_{0},\tau)\cap\Lambda_{k}(\lambda,\epsilon)$.
For $m\in \mathbb{N} $, choose $ 0<\varepsilon < \min \{ \frac{1}{m}, \frac{1}{4} \vep_k^2 \}$ such that:
\begin{align}\notag
h_{\mu }(f)- \frac{1}{m}<\liminf _{n\to \infty } \frac{\log N_{f}(n,\varepsilon ,\delta )}{n}   \le \limsup _{n\to \infty } \frac{\log N_{f}(n,\varepsilon ,\delta )}{n}. 
\end{align}

Let $\xi $ be a finite measurable partition of $M$ with $\diam \xi$ $<\beta(k,\varepsilon)$ and $\xi>  \left \{ \triangle _{k},M\setminus \triangle _{k} \right \} $, where $\beta(k,\varepsilon)$ is taken as in the Closing lemma \ref{clo}.
Consider the set
$$
\triangle _{k,n}=\left \{ x\in \triangle _{k}:f^{l} (x) \in \xi (x),l\in[n,(1+\frac{1}{m})n] \right \}.
$$
By ergodicity of $\mu$ and Birkhoff's ergodic theorem, $\mu(\triangle _{k,n})\to\mu(\triangle _{k})$ as $n \to \infty$. 
For large $n$, $\mu(\triangle _{k,n})>1-\delta $. A set $F \subset M$ is called \textit{$(n,\varepsilon)$-separated set}, if, for $\forall x,y\in F$, there exists some $k\in \left\{ 0,1,...,n-1\right\}$ such that $d(f^{i}(x),f^{i}(y))>\varepsilon$. 
Let $E_{n} \subset \triangle _{k,n}$ be a maximal $(n,\varepsilon)$-separated set in cardinality. Then $\triangle _{k,n}\subset \underset{x\in E_{n}}{\bigcup }D _{f}(x,n,\varepsilon )$, where $D_{f}( x, n, \varepsilon )= \{ y\in M:\underset{0\le l< n}{\max} d(f^{l}(x),f^{l}(y)\le \varepsilon \} $.
So, for infinitely many $n$:
\begin{align}\notag
h_{\mu }(f)- \frac{2}{m}<\frac{\log \sharp E_{n}}{n}. 
\end{align}
Choose $n_{j}$ from these $n$ with $n_{j}\to \infty$ as $j\to \infty$. 
The following inequality holds:
\begin{align}\notag
h_{\mu }(f)-\frac{2}{m}<\frac{\log \sharp E_{n_{j}}}{n_{j}}. 
\end{align}
For $i\in[n_{j},(1+\frac{1}{m})n_{j}]$, let $V_{i}=\left \{x\in E_{n_{j}}:f^{i}x\in \xi(x)\right \}$. 
Let $l$ be the index $i$ maximizing $\sharp V_i$.
Since $e^{\frac{n_{j}}{m}}>\frac{n_{j}}{m}$, we have 
\begin{align} \label{i16}
    \sharp V_{l} \ge e^{n_{j}(h_{\mu}(f)-\frac{3}{m})}.
\end{align}
Note that $R(y,t)=\Psi _{y}(B_{\frac{t\varepsilon_{k}}{2}}^{s}\times B_{\frac{t\varepsilon_{k}}{2}}^{u})\supseteq B(y,\frac{t\vep_k^2}{2})$ for any $y \in  V_{l}$, thus  $\triangle _{k}\subset R(y,t)$.  By Theorem S.4.16 of \cite{KH}, fix  $y\in V_l$, for any $v\in V_l$, the connected component $CC(R(y,t)\cap f^l(R(y,t), f^l(v))$ containing $f^l(v)$ is an admissible $u$-rectangle and $f^{-l}(CC(R(y,t)\cap f^l(R(y,t), f^l(v)))$ is an admissible $s$-rectangle.  Let $t<\vep/6$, then by hyperbolicity, 
$$\diam (f^i(CC(R(y,t)\cap f^{-l}(R(y,t)), v)))\le 3t \max\{e^{-(\lambda-2\epsilon)i},\,e^{-(\lambda-2\epsilon)(l-i)}\}<\frac{\vep}{2},\quad 0\le i\le l. $$ Thus different $v$ correspond to the different $s$-rectangle $CC(R(y,t)\cap f^{-l}(R(y,t)), v))$, which implies that there are $\sharp V_l$ $s$-rectangles. 
Let
$$\Upsilon_0=\bigcap_{n\in\mathbb{Z}} f^{nl}(\bigcup_{v\in V_l} CC(R(y,t)\cap f^{-l}(R(y,t)), v)),$$
then $\Upsilon_0$ is a mixing basic set of $f^l$. Thus $$\Upsilon=\bigcup_{0\le i<l} f^i(\Upsilon_0)$$ is  a  hyperbolic horseshoe  of $f$  with base $\Upsilon_0 \subset R(y,t)$.
By inequality (\ref{i16}), 
\begin{align}\notag
    h_{top}(f|\Upsilon )\ge  \frac{h_{\mu}(f)-\frac3m}{1+\frac1m}.
\end{align}
By letting $m$ sufficiently large, for $\vartheta>0$:
\begin{align}\notag
   h_{top}(f|\Upsilon )\ge h_{\mu}(f)-\vartheta,
\end{align}
which implies 
\begin{align}\label{i11}
   h_{top}(f^l|\Upsilon_0 )= h_{top}(f^l|\Upsilon )=lh_{top}(f|\Upsilon ) \ge lh_{\mu}(f)-l\vartheta. 
\end{align}

We {\bf claim}  that for any periodic points $p\overset{h}{\sim }\mu$ and $q \in \Upsilon_0$, we have $p \overset{h}{\sim } q$.
\begin{proof} [Proof of claim]
Since 
$$
d(y,f^{l}(y))<\diam \xi <\beta(k,\varepsilon), 
$$
by the Closing lemma \ref{clo}, there exists a periodic point $z(y)$ such that 
\begin{itemize}
\item[(i)] $f^l(z(y))=z(y)$;
    \item[(ii)]
 $d(f^{i}z(y),f^{i}y)<\varepsilon$, for $i=0,1,\cdots, l-1$;
\item[(iii)] The local stable manifold (unstable manifold) of  $z(y)$ is an admissible $(s,1)$-manifold ($(u,1)$-manifold) near $y$.
\end{itemize}

For any $x\in \triangle_{k}$, $d(x,y) <\frac{t}{4}\vep_k^2<\frac{1}{4}\vep_k^2$, then $d(x,z(y))<\frac{1}{2}\vep_k^2$, thus $\|\Phi_x^{-1}(x)- \Phi_x^{-1}(z(y))\|\le \vep_k/2$. 
 By item (iii), since $\gamma<1/2$, the local stable (unstable) manifold of $z(y)$ intersects the local unstable (stable) manifold of $x$ transversely.
By ergodicity of $\mu$, then
$z(y) \overset{h}{\sim }\mu$.
For any periodic point $q\in \Upsilon_0 \subset \triangle_{k}$, $z(y)\overset{h}{\sim }q$.
By transitivity and $p\overset{h}{\sim }\mu$, we have $p\overset{h}{\sim }q$.
\end{proof}

\noindent \textit{\textbf{Step 2.} The construction of the new transverse homoclinic point.}

Without loss of generality, suppose that $p$ and $q$  are fixed points of $f^l$. Denote $g=f^l$. 
From equality (\ref{hyp}), we have 
$$
h_{top}(g|\Upsilon_0 )= \underset{n\to \infty }{\lim} \frac{1}{n} \log \sharp (H_g(q,n,\varepsilon)\cap \Upsilon_0). 
$$
To prove Theorem \ref{theorem a}, it suffices to show that for small $\vep'>0$,
\begin{align}\notag
    \sharp (H_g(q,n,\varepsilon )\cap \Upsilon_0) \le \sharp H_g(p,n,\vep'),\quad \forall \,\,n.
\end{align}

Now for small $\vep' >0 $, we can choose two discs $B^{s} = W_{\vep'}^{s}(p)\setminus gW_{\vep'}^{s}(p)$ , $B^{u}= W_{\vep'}^{u}(p)\setminus g^{-1}W_{\vep'}^{u}(p)$. 
It holds that 
$$ \bigcup_{i=0}^{+\infty} g^{-i}(B^{u}) =W_{\vep'}^{u}(p)\setminus \{p\},$$
$$ \bigcup_{i=0}^{+\infty} g^{i}(B^{s}) =W_{\vep'}^{s}(p)\setminus \{p\},$$
By uniform hyperbolicity of $\Upsilon_{0}$, the size of the regular neighborhood $R(y,t)$ is constant for any $y\in \Upsilon_{0}$.
For $y\in  H_g(q,n,\varepsilon )\cap \Upsilon_0$, we select $j>0$ such that $B^{u}$ and $B^{s}$ satisfy that
\begin{itemize}
    \item 
[(i)]
$g^{j}B^{u}\cap R(y,t)$  is an admissible $(u,t)$-manifold near $y$ 
 \item 
[(ii)]$g^{-j}B^{s}\cap R(g^{n} y,t)$ is an admissible $(s,t)$-manifold near $g^{n} y$.
\end{itemize}

Note that   $B^u \cap W^{s}(q)\neq \emptyset$. 
By the $\lambda$-lemma, choose $j_{1}>0$ so that for any $j\ge j_1$,   $g^{j}B^{u}\cap R(y,t)$ contains  an admissible $(u,t)$-manifold $D^u$  near $y$, and close to $W^u_{\vep_k}(q)$ in the $C^{1}$ sense.
 Similarly, since $B^s \cap W^u(q) \ne \emptyset$, there is  $j_{2}>0$ so that for any $j\ge j_2$, $ g^{j}B^{s} \cap R(g^n y,t)$ contains  an admissible $(s,t)$-manifold $D^s$ near $g^{n}(y)$, and  close to $W^s_{\vep_k}(q)$ in the $C^{1}$ sense.
We take the number $j\ge \max \{j_1, j_2 \}$.

We further require how close $D^u$  ($D^s$) should be to $W^u_{\vep_k}(q)$ ($W^s_{\vep_k}(q)$).
For any $y\in H_g(q,n,\varepsilon )\cap \Upsilon_0$,
take $0<a<\frac{\varepsilon}{2}$.
Take a sufficiently large number $j$ as above such that $W^{s}_{a K\vep_k}(y)\cap D^{u} \ne \emptyset$ and  $W^{u}_{aK\vep_k}(g^{n} y)\cap D^{s} \ne \emptyset$. 
\begin{figure}[h]
    \centering
    \includegraphics[width=0.4\linewidth]{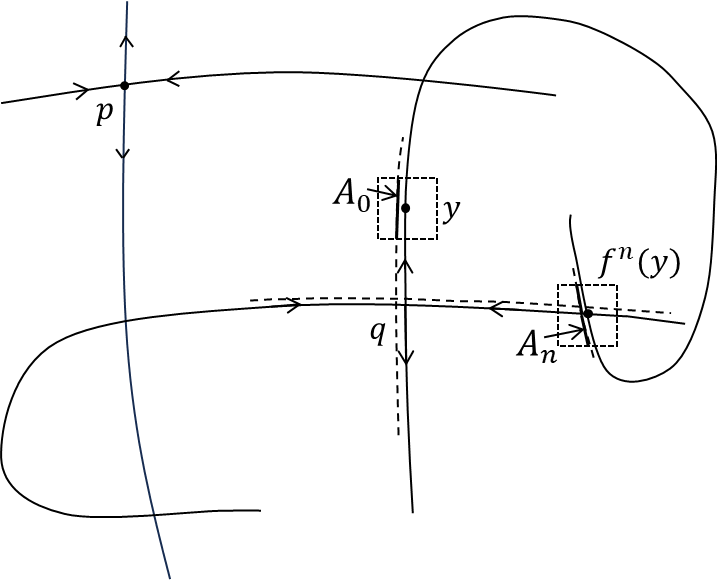}
    \caption{The construction of the new homoclinic points.}
    \label{construction}
\end{figure}

Now we aim to construct a new homoclinic point.
For  $y\in H_g(q,n,\varepsilon)\cap \Upsilon_{0}$, write $A_{0}=D^{u}$.
Define
\begin{align}
A_{1}={}&g(A_{0})\cap R(g(y),t)\notag \\
A_{2}={}&g(A_{1})\cap R(g^{2}(y),t)\notag
\end{align}
$$
 \vdots 
$$
$$
A_{n}=g(A_{n-1})\cap R(g^{n}(y),t).
$$
Then $A_{i}$ is an admissible $(u,t)$-manifold near $g^{i}(y)$ with $i=0,1,\cdots,n$. 
Thus, $A_{n}$ intersects $D^{s}$ transversely at a point $x_{y}$ with $x_{y}\in \Gamma(g,p)$.
It follows that  $d(g^{i}(y),g^{i}(g^{-n}(x_{y})))<aK\vep_k (K^{-1}\vep_k^{-1})=a$,  $\forall \, i=0,1,\cdots, n$.
Then:
\begin{align}
\Theta ^{s} (g^{-(n+j)}(x_{y}),p,\vep' )&{}=2j+ n,\notag\\
\Theta ^{u} (g^{-(n+j)}(x_{y}),p, \vep' )&{}=0.\notag   
\end{align}
So
$$
g^{-(n+j)}(x_{y})\in H_g(p,2j+n,\vep'),
$$ 
and $j$ is independent of $n$. 

If $y_{1}\ne y_{2}\in H_g(q,n,\varepsilon)\cap \Upsilon_{0}$, by the expansiveness of $g|\Upsilon_{0}  $, then for $\vep$ be smaller than the expansiveness constant,  $d(g^{i}(y_{1}),g^{i}(y_{2}))>\varepsilon$, for some $ i=0,1,\cdots,n$.
Then
\begin{align}\notag
    \varepsilon &<d(g^{i}(y_{1}),g^{i}(y_{2}))\\ \notag
    &\le d(g^{i}(y_{1}),g^{i}(x_{y_{1}}))+d(g^{i}(x_{y_{1}}),g^{i}(x_{y_{2}}))+d(g^{i}(x_{y_{2}}),g^{i}(y_{2}))\\ \notag
    &\le 2a+d(g^{i}(x_{y_{1}}),g^{i}(x_{y_{2}})).
\end{align}
Since $2a<\varepsilon$, we have $x_{y_{1}}\ne x_{y_{2}}$.

Therefore:
\begin{align} \label{i15}
    \sharp (H_g(q,n,\varepsilon)\cap \Upsilon_0 )\le \sharp H_g(p,2j+n,\vep'),\quad \forall \, n .
\end{align}

\noindent \textit{\textbf{Step 3.} Estimation on metric entropy and homoclinic growth.}

From (\ref{hyp}), (\ref{i11}) and (\ref{i15}), 
\begin{align}
    lh_{\mu}(f)-l\vartheta &{}<h_{top}(g|\Upsilon _0)\notag\\
                &{}=\underset{n\to \infty}{\lim} \frac{1}{n} \log \sharp( H_g(q,n,\varepsilon)\cap \Upsilon_0) \notag\\ \notag
                &{}\le \underset{n\to \infty }{\limsup} \frac{1}{n} \log \sharp H_g(p,2j+n,\vep') \\ \notag
               &\le  \underset{n\to \infty }{\limsup} \frac{1}{n} \log \sharp H_f( p, (2j+n)l,\vep')\\ \notag
                &=  l\, \underset{n\to \infty }{\limsup} \frac{1}{(2j+n)l} \log \sharp H_f( p, (2j+n)l,\vep')
                \\ \notag
                &\le   l\, \underset{n\to \infty }{\limsup} \frac{1}{n} \log \sharp H_f(p, n,\vep').
\end{align}
Thus Theorem \ref{theorem a} follows by dividing $l$ on both sides and the arbitrariness of $\vartheta$.
\end{proof}

\section{exp-type growth}\label{exp}
Having established the log-type, we now turn to the exp-type case, which requires a different approach via symbolic dynamics.

\subsection{Symbolic dynamics and homoclinic point}\label{exp1}
The following is based on \cite{S} and \cite{SBO}. 
Let $f:M\to M$ be a $C^{r}$ $(r>1)$ diffeomorphism with $h_{\mu}(f)>0$ on a compact Riemannian manifold $M$, where $\mu$ is an ergodic invariant probability measure.
Sarig \cite{S}  constructed a countable Markov partition $\mathcal{R}$ of $M$ and developed symbolic dynamics for surface diffeomorphisms, and it is also generalized to higher dimension by  Ben Ovadia \cite{SBO}.
Let $G$ be the directed graph with vertices $\mathcal{V}:=\mathcal{R}$ and edges: 
$$\mathcal{E} :=\left \{ (R_{1},R_{2})\in \mathcal{V} ^{2}:\,\,R_{1},R_{2}\in \mathcal{V}\,\, s.t.\,\, R_{1} \cup f^{-1}(R_{2}) \ne \emptyset \right\}.
$$
If $(R_{1},R_{2})\in \mathcal{E}$, write $R_{1}\to R_{2}$.
The topological Markov shift is:
$$
\Sigma :=\Sigma (G)=\left \{ (R_{i})_{i\in \mathbb{Z} }\in \mathcal{R} ^{ \mathbb{Z}}:R_{i}\to R_{i+1} \text{ for all }i\in \mathbb{Z} \right \}.
$$
The dynamical system $\sigma:\Sigma \to \Sigma$ is defined on the metric space $(\Sigma,d)$, where
$$d(\underline{v},\underline{u}):=\exp [-\inf \left \{ |i| : v_{i}\ne u_{i}\right \} ],
$$
and the left shift map on $\Sigma $ by 
$$\sigma: (v_{i})_{i\in \mathbb{Z}}\to (v_{i+1})_{i\in \mathbb{Z}}.$$ 
Since every vertex in $G$ constructed in \cite{S, SBO} has finite degree, $\Sigma$ is locally compact.
And this $G$ is \textit{irreducible}.
That is to say that for any two vertices $v$, $u$ from $G$ there exists a path from $v$ to $u$ on the graph $G$. 
The regular part of $\Sigma$ is :
$$
\Hat{\Sigma}:=\left \{ (v_{i})\in \Sigma : \exists\,\, v,w\in\mathcal{V}, \exists\,\, n_{k},m_{k}\uparrow \infty \,\,s.t.\,\,v_{n_{k}}=v \text{ and }v_{m_{k}}=w \right \},
$$
which has full measure for every $\sigma$-invariant probability measure on $\Sigma$.

Let $\kappa(\mu):=\min \left \{ \left | \kappa _{i}  \right | :\kappa _{i} \text{ is a Lyapunov exponent of }\mu \right \}.$ 
If $0<\kappa<\kappa(\mu)$, $\mu$ is called \textit{$\kappa$-hyperbolic}.

\begin{theorem}[\cite{S, SBO}] \label{the 5}
Let $f$ be a $C^{r}$ $(r>1)$ diffeomorphism on a compact manifold $M$. For every $\kappa>0$, there are a locally compact countable state Markov shift $\Sigma$ and a H\"older continuous map $\pi:\Sigma \to M $ such that $\pi \circ \sigma =f\circ \pi $ and 
\item[(1)] $\pi:\Hat{\Sigma} \to M $ is finite-to-one, More precisely, if $x=\pi(\underline{x})$ where $x_{i}=a$ for infinitely many $i<0$ and $x_{i}=b$ for infinitely many $i>0$, then $\left | \pi^{-1}(x) \right |$ is bounded by a constant $C(a,b)$.
\item[(2)] $v(\pi(\Hat{\Sigma}))=1$ for every $\kappa$-hyperbolic measure $v\in \mathcal{M}^{*}(f)$. Moreover, there exists an ergodic measure $\hat{v}$ on $\Sigma$ such that $\pi_{*}(\hat{v})=v$. Conversely, if $\hat{v}$ is an ergodic measure on $\Sigma$, then $\pi_{*}(\hat{v})$ is $f$-ergodic hyperbolic and $h_{\hat{v}}(\sigma)=h_{v}(f)$.
\item[(3)] For any $x\in \pi(\Hat{\Sigma})$, there is a splitting $T_{x}M=E^{s}(x)\oplus E^{u}(x) $, where 

$(i)\underset{n\to+\infty }{\limsup} \frac{1}{n}\log\left \| Df_{x} ^{n}|_{E^{s}(x)}\right \| \le -\frac{\kappa}{2}$;

$(ii)\underset{n\to+\infty }{\limsup} \frac{1}{n}\log\left \| Df_{x} ^{-n}|_{E^{u}(x)}\right \| \le -\frac{\kappa}{2}$.
\item[(4)] For every $\underline{u}
\in \Sigma$, there are two $C^{1}$ sub-manifolds $V^{u}(\underline{u})$, $V^{s}(\underline{u})$ pass through $x:=\pi(\underline{u})$ such that

$(i) \forall y\in V^{s}(\underline{u}),\forall n\ge0,d(f^{n}(y),f^{n}(x))\le e^{-\frac{n\kappa }{2} }$ and $T_{x}V^{s}(\underline{u})=E^{s}(x)$,

$(ii) \forall y\in V^{u}(\underline{u}),\forall n\ge0,d(f^{-n}(y),f^{-n}(x))\le e^{-\frac{n\kappa }{2} }$ and $T_{x}V^{u}(\underline{u})=E^{u}(x)$.
\end{theorem}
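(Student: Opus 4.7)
The plan is to follow the construction of countable Markov partitions pioneered by Sarig in dimension two and extended by Ben Ovadia to arbitrary dimension. The argument proceeds in three stages: first, construct a countable family of Pesin-adapted charts; second, assemble these charts into a topological Markov shift via an admissibility relation together with a shadowing argument; third, verify that the resulting coding map $\pi$ is finite-to-one on the regular part $\hat\Sigma$ and realizes every $\kappa$-hyperbolic measure.

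For the first stage, one works on the Pesin set $\Lambda(\kappa,\epsilon)$ of $\kappa$-hyperbolic points and attaches to each $x\in\Lambda(\kappa,\epsilon)$ a Lyapunov chart $\Psi_x$ on a ball whose radius is governed by a tempered parameter $q(x)$. The essential step is to \emph{discretize} these parameters (the chart size, the hyperbolic splitting, the linearization data) along a countable grid and extract a locally finite subfamily that still covers every regular orbit with bounded overlap; this produces the vertex set $\mathcal{V}$ and makes the graph $G$ countable with finite out-degree at each vertex.

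For the second stage, one declares $v\to w$ when $f$ maps a fixed fraction of $\Psi_v$ into $\Psi_w$ with uniformly bounded nonlinear distortion. A Hadamard–Perron / graph-transform argument shows that every admissible biinfinite path in $G$ shadows a unique orbit through nested admissible stable and unstable manifolds. Intersecting these along the Sinai–Bowen recipe yields the Markov partition $\mathcal{R}$ and the coding $\pi$. Item (3) then follows because the tangent splitting at $\pi(\underline v)$ is inherited from the adapted charts, with the contraction rates controlled by the quantization tolerance, giving $\kappa/2$; item (4) follows because the local stable and unstable manifolds at $\pi(\underline v)$ appear as admissible $s$- and $u$-manifolds through $\Psi_{v_0}$, and graph-transform contraction yields the exponential shrinking at rate $\kappa/2$.

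The main obstacle is item (1), the finite-to-one property on $\hat\Sigma$, which is the technical heart of the Sarig–Ben Ovadia framework. The bound $|\pi^{-1}(x)|\le C(a,b)$ rests on an \emph{inverse theorem}: whenever two admissible biinfinite sequences through $x$ share symbols $a$ (infinitely often on the left tail) and $b$ (infinitely often on the right tail), their entries must agree up to an ambiguity bounded solely in terms of $a$ and $b$. The proof requires fine distortion control together with the uniqueness of admissible manifolds at the shared symbols, and is precisely where the quantization of the first stage pays off. Granting (1), item (2) — the existence of an ergodic lift for every $\kappa$-hyperbolic measure and the identity $h_{\hat v}(\sigma)=h_v(f)$ — follows from the Rokhlin disintegration of $\pi_\ast$ combined with an Abramov-style count over the finite fibers.
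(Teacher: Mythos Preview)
Your outline is a faithful high-level sketch of the Sarig--Ben Ovadia construction, but there is no proof in the paper to compare it against: Theorem~\ref{the 5} is stated with the citation \cite{S, SBO} and is quoted as a known result, not proved. The paper uses it as a black box in Section~\ref{exp} to build the symbolic coding needed for Theorem~\ref{theorem b}. So strictly speaking you have not reproduced the paper's proof, because there is none; you have instead summarized the original arguments of Sarig and Ben Ovadia.

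That said, as a summary of those original arguments your three-stage plan is accurate in spirit. A few remarks on fidelity: the discretization in the first stage is not of a single tempered parameter $q(x)$ but of the full ``double chart'' data $(\Psi_x, p^s, p^u)$ encoding separate stable and unstable scales, and local finiteness of the cover is a nontrivial lemma rather than an immediate extraction. In the third stage, your description of the inverse theorem is correct, but the bound $C(a,b)$ comes more specifically from a counting argument on the finitely many ways an admissible chain can thread through the locally finite cover between recurrences to $a$ and $b$; the ``uniqueness of admissible manifolds'' you invoke is part of this but not the whole story. Finally, for item (2), the entropy equality $h_{\hat v}(\sigma)=h_v(f)$ follows directly from the finite-to-one property (entropy is preserved by finite-to-one factor maps), so invoking Abramov is unnecessary.
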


For a periodic point $\underline{v} \in \Sigma$ of period $m$, i.e. $\sigma^{m}\underline{v}=\underline{v}$,  define:
\begin{eqnarray*}
    W^{s}(\underline{v})&:=&\left \{ \underline{u}:\exists \,\, N,  v_{i}= u_{i}, \text{ for all }i\ge N \right \},\\
    W^{u}(\underline{v})&:=&\left \{ \underline{u}:\exists \,\, N,  v_{i}=u_{i}, \text{ for all }i\le -N \right \}. 
\end{eqnarray*}
The stable set $W^{s}(\underline{v})\subset \Sigma$ and the unstable set $ W^{u}(\underline{v}) \subset \Sigma$ act as the stable manifold and unstable manifold respectively. 
Therefore, we can get the following result:
\begin{prop} \label{prop 3}
    Let $p\in M$ be a hyperbolic periodic point with $\pi(\underline{v})=p$. If $\underline{u} \in W^{u}(\underline{v}) \cap W^{s}(\underline{v})$, then $z=\pi (\underline{u})$ is a transversely homoclinic point of $p$.
\end{prop}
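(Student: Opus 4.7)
The plan is to verify the three defining properties of a transverse homoclinic point at $z=\pi(\underline{u})$: namely $z\in W^s(p)$, $z\in W^u(p)$, and the transversality $T_z M = T_z W^s(p)\oplus T_z W^u(p)$, implicitly assuming $\underline{u}$ is not on the symbolic orbit of $\underline{v}$ so that $z\neq p$.

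First, I would establish that $z\in W^s(p)\cap W^u(p)$. By definition of $W^s(\underline{v})$, there exists $N\ge 0$ with $u_i=v_i$ for all $i\ge N$, so after applying $\sigma^N$ the sequences $\sigma^N\underline{u}$ and $\sigma^N\underline{v}$ agree on all non-negative indices. In the Sarig/Ben Ovadia framework, coincidence of forward tails forces $f^N(z)=\pi(\sigma^N\underline{u})$ to lie in the analytic local stable submanifold $V^s(\sigma^N\underline{v})$ through the point $p':=\pi(\sigma^N\underline{v})$, which sits on the orbit of $p$. Theorem \ref{the 5}(4)(i) then yields
\[d(f^{N+n}(z),f^n(p'))\le e^{-n\kappa/2}\qquad\text{for all } n\ge 0,\]
and since the orbit of $p$ is finite this implies $d(f^n(z),f^n(p))\to 0$, hence $z\in W^s(p)$. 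The symmetric argument applied to $\underline{u}\in W^u(\underline{v})$ (via agreement of past tails and the $V^u$ manifolds) gives $z\in W^u(p)$.

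Next, I would establish transversality at $z$. Theorem \ref{the 5}(3) supplies an invariant splitting $T_z M = E^s(z)\oplus E^u(z)$, and Theorem \ref{the 5}(4) provides $C^1$ submanifolds $V^s(\underline{u})$ and $V^u(\underline{u})$ through $z$ with $T_z V^s(\underline{u})=E^s(z)$ and $T_z V^u(\underline{u})=E^u(z)$. The exponential contraction estimate in Theorem \ref{the 5}(4)(i), combined with Step 1, shows that any $y\in V^s(\underline{u})$ satisfies $d(f^n y, f^n z)\le e^{-n\kappa/2}$ while $f^n z$ approaches the orbit of $p$, so $y\in W^s(p)$ as well; thus $V^s(\underline{u})\subset W^s(p)$ in a neighborhood of $z$, whence $E^s(z)\subset T_z W^s(p)$. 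The analogous inclusion $E^u(z)\subset T_z W^u(p)$ follows symmetrically. Since $\dim E^s(z)+\dim E^u(z)=\dim M$, the inclusions are equalities and $T_z M = T_z W^s(p)\oplus T_z W^u(p)$.

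The hard part is the first reduction in Step 1: translating coincidence of symbolic tails into the analytic statement that $f^N(z)$ actually lies in $V^s(\sigma^N\underline{v})$. This is not tautological and rests on the Markov property of the partition $\mathcal{R}$ together with the concrete realization of $V^s(\cdot)$ in the symbolic model (as the image under $\pi$ of the sequences agreeing with $\sigma^N\underline{v}$ at all non-negative indices), which is built into Sarig's and Ben Ovadia's construction and can be cited rather than reproved. Once this geometric identification is in hand, the contraction/expansion estimates in Theorem \ref{the 5}(4) yield everything, and the non-triviality $z\neq p$ follows from the finite-to-one property in Theorem \ref{the 5}(1) as long as $\underline{u}$ is taken off the symbolic periodic orbit of $\underline{v}$.
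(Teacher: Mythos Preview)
Your proposal is correct and follows the same overall architecture as the paper: first show $z\in W^s(p)\cap W^u(p)$, then deduce transversality from the splitting $T_zM=E^s(z)\oplus E^u(z)$ together with the inclusions $V^{s/u}(\underline{u})\subset W^{s/u}(p)$ provided by Theorem~\ref{the 5}(4). The transversality step is essentially identical in both.

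The one genuine difference is in your Step~1, and precisely in what you flagged as ``the hard part.'' You route through the geometric identification $f^N(z)\in V^s(\sigma^N\underline{v})$, which requires invoking the Markov property and the concrete realization of local stable sets in the Sarig/Ben Ovadia coding. The paper bypasses this entirely: since $u_i=v_i$ for all $i\ge N$, one has $d(\sigma^n\underline{u},\sigma^n\underline{v})\le e^{-|n-N|}$ in the symbolic metric, and the H\"older continuity of $\pi$ (stated in Theorem~\ref{the 5}) immediately gives $d(f^n z,f^n p)\to 0$ exponentially, hence $z\in W^s(p)$. So what you identified as the delicate step is in fact avoidable with a one-line argument; your approach still works, but it imports more machinery than necessary.
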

\begin{proof}[Proof of Proposition \ref{prop 3}]
By the definition of $W^{s}(\underline{v})$,
 $d(\sigma^{n}(\underline{u}),\sigma^{n}(\underline{v}))\le e^{-|N-n|}$.
Denote $z=\pi(\underline{u})$.
By Theorem \ref{the 5}, the map $\pi:\Sigma \to M$ is a H\"older continuous, then 
$d(f^{n}(z),f^{n}(p))\to 0$ exponentially fast.
So $z\in W^{s}(p)$.
Similarly, $z\in W^{u}(p)$.
Then $z\in W^{s}(p)\cap W^{u}(p)$.

Now we need to prove that the intersection at point $z$ is transverse, i.e. $T_{z}M=T_{z}W^{s}(p)\oplus T_{z}W^{u}(p)$.
By $(4)$ of Theorem \ref{the 5}, there are two $C^{1}$ sub-manifolds $V^{u}(\underline{u})$, $V^{s}(\underline{u})$ pass through $z$ such that $V^{u}(\underline{u}) \subset W^{u}(p)$ and $V^{s}(\underline{u}) \subset W^{s}(p)$.
So $T_{z}W^{s/u}(p)=T_{z}V^{s/u}(\underline{u} )=E^{s/u}(z)$ with $E^{s/u}(z)$ as in (3) of Theorem \ref{the 5}.
Since $E^{s}(z) \cap E^{u}(z)=\emptyset$, the manifolds $W^{u}(p)$, $W^{s}(p)$ are transversal at the point $z$.
\end{proof}

\subsection{Proof of Theorem \ref{theorem b}}
\begin{proof} 
Let $\underline{p}\in \Sigma$ be a periodic point with period $m$ and $\pi(\underline{p})=p$.
Fix a vertex $w_{0} \in \mathcal{V}$.
Let $\underline{q}\in \Sigma$ be a periodic point with period $l$, $q_{0}= w_{0}$ and $\pi(\underline{q})=q$ .
And the point $q$ satisfies $p\ne q$.
Define the set 
$P_{n}(w_{0})=\left \{ \underline{v}\in \Sigma :\sigma ^{n}(\underline{v})=\underline{v},\,v_{0}=w_{0} \right \} $.
Our proof will be divided into three steps in the following.

\textit{\textbf{Step 1.} Construct two transverse intersection points that belong to $W^{s}(p)\cap W^{u}(q)$ and $W^{u}(p)\cap W^{s}(q)$ respectively.}

Since $G(\mathcal{V})$ is irreducible, there are two paths from $p_{m-1}$ to $w_{0}$,  and from $w_{0}$ to $p_{0}$.
Exactly, there exist two sequences of symbols $p_{m-1}\to r_{K} \to \cdots \to r_{1} \to w_{0}$ and $w_{0}\to u_{1} \to \cdots \to u_{N-2} \to p_{0}$.
Construct 
$$
\Gamma_{1} :=\overbrace{q_{0},\cdots ,q_{l-1}}^{\infty},w_{0},u_{1}, \cdots ,u_{N-2},\overbrace{p_{0},\cdots ,p_{m-1}}^{\tilde{K}},p_{0}\mid_0,\cdots ,p_{m-1},\overbrace{p_{0},\cdots ,p_{m-1}}^{\infty}
$$
satisfying that there are $\tilde{K}$-copies of $p_{0}, \cdots ,p_{m-1}$ in the left of 0 position. 
Then for all $i$,
$d(\sigma^{i}\Gamma_{1},\sigma^{i}\underline{p})\le e^{-i-\tilde{K}m}$ and
$d(\sigma^{-i}\sigma^{-(N+\tilde{K}m)}(\Gamma_{1}),\sigma^{-i}\underline{q})\le e^{-i}$.
Since $\pi:\Sigma\to M$ is H\"older continuous, there exist $C>0$, $\delta>0$  such that for any $\underline{v}_{1},\underline{v}_{2} \in \Sigma$,
$$
d(\pi(\underline{v}_{1}),\pi(\underline{v}_{2}))\le Cd(\underline{v}_{1},\underline{v}_{2})^{\delta}.
$$
Then
$d(f^{i}\pi(\Gamma_{1}),f^{i}\pi(\underline{p})),d(f^{-i-N-\tilde{K}m}\pi(\Gamma_{1}),f^{-i}\pi(\underline{q}))\to 0$ exponentially fast as $i\to \infty$.
So $\pi (\Gamma_{1}) \in W^{s}(p)$ and $f^{-(N+\tilde{K}m)}\pi( \Gamma_{1}) \in W^{u}(q)$.
Since $p\ne q$, $\pi(\Gamma_{1})$ is a transverse intersection and $d(\pi(\Gamma_{1}),p)>0$.
The point $\pi(\Gamma_{1})$ belongs to some local stable manifold of $p$.
Now we can calculate the size of this local stable manifold.  
It holds that  for $i\ge 0$,
$$d(f^i\pi(\Gamma_1), f^i(p))= d(f^i(\pi(\Gamma_1), f^i(\pi(\underline{p})))\le Ce^{-(i+\tilde{K}m)\delta}\le Ce^{-\tilde{K}m\delta}, $$
thus $\pi(\Gamma_1)\in W^s_{Ce^{-\tilde{K}m\delta}} (p)$.  
Denote  $\eta=Ce^{-\tilde{K}m\delta}$. 
Assume that $d(\pi(\Gamma_{1}),p)= \eta_{1}$.

Construct 
$$
\Gamma_{2} :=\overbrace{p_{0},\cdots ,p_{m-1}}^{\infty},p_{0}\mid_0,\cdots ,p_{m-1},\overbrace{p_{0},\cdots ,p_{m-1}}^{\tilde{K}-1},r_{K}, \cdots ,r_{1},\overbrace{q_{0},\cdots ,q_{l-1}}^{\infty}
$$
satisfying  there are $(\tilde{K}-1)$-copies of $p_{0}, \cdots ,p_{m-1}$ in the right of 0 position. 
Similarly, we get $\pi (\Gamma_{2}) \in W^{u}(p)$ and $f^{K+\tilde{K}m}\pi( \Gamma_{2}) \in W^{s}(q)$.
So $\pi(\Gamma_{2})\in W^{u}(p)$ is a transverse intersection and $d(\pi(\Gamma_{2}),p)>0$.
And $\pi(\Gamma_{2})\in W^{u}_{\eta}(p)$.
Assume that $d(\pi(\Gamma_{2}),p)= \eta_{2}$.

 \textit{ \textbf{Step 2.}For any sequence $w_{0},w_{1}, \cdots ,w_{n-1}$ from $P_{n}(w_{0})$, we construct $\gamma \in \Sigma$ that contains the sequence $w_{0},w_{1}, \cdots ,w_{n-1}$ and satisfies that, by iterating the point $\pi(\gamma)$, there is a transverse homoclinic point of $p$ belonging to $W^{u}_{\varepsilon}(p)\setminus f^{-1}W^{u}_{\varepsilon}(p)$.
Then we will calculate the time from $W^{u}_{\varepsilon}(p)\setminus f^{-1}W^{u}_{\varepsilon}(p)$ to $W^{s}_{\varepsilon}(p)\setminus fW^{s}_{\varepsilon}(p)$. 
}

For any $0<\varepsilon\le \frac{1}{100}\min \left \{ \eta _{1},\eta _{2}  \right \} $,  we claim that there exist positive integers $J_{1}$ and $J_{2}$  such that $f^{J_{1}}\pi(\Gamma_{1})\in W^{s}_{\varepsilon}(p)\setminus fW^{s}_{\varepsilon}(p)$ and $f^{-J_{2}}\pi(\Gamma_{1})\in W^{u}_{\varepsilon}(p)\setminus f^{-1}W^{u}_{\varepsilon}(p)$.
We will only prove the former result, and the latter result can be proved in a similar way.
In deed, there exist  $\lambda\in (0,1)$ and $C_p>0$ such that  for any $y,z\in W^{s}_{\eta}(p)$, 
$$d(f^i(y), f^i(z))\le C_p\lambda^id(y, z),\,\,\forall\, i\ge 0.
$$
For $\pi(\Gamma_{1})$ and $p$,
$$
d(f^i \pi(\Gamma_{1}), f^i p)\le C_p\lambda^id(\pi(\Gamma_{1}), p) \le C_p\lambda^i \eta_{1}.
$$
It is only necessary to meet 
$$
C_p\lambda^i \eta_{1} \le \varepsilon,\, i.e. \,\, i\ge \frac{\ln(\varepsilon /C_{p}\eta_{1} )}{\ln\lambda }.
$$
Take $J_{1}=\left [  \frac{\ln(\varepsilon /C_{p}\eta_{1} )}{\ln\lambda } \right ] +1$.
So $f^{J_{1}}\pi(\Gamma_{1})\in W^{s}_{\varepsilon}(p)\setminus fW^{s}_{\varepsilon}(p)$.
Take $B(\pi(\Gamma_{1}),\tilde{\rho_{1}})\subset W^{s}_{\varepsilon}(p)\setminus fW^{s}_{\varepsilon}(p)$ and $B(\pi(\Gamma_{2}),\tilde{\rho_{2}})\subset W^{u}_{\varepsilon}(p)\setminus f^{-1}W^{u}_{\varepsilon}(p)$.
Now we hope to construct $\gamma \in \Sigma$ such that 
$$d(f^{J_{1}}\pi (\gamma),f^{J_{1}}\pi (\Gamma_{1}))\le C_{p}\lambda^{J_{1}}d (\pi(\gamma),\pi(\Gamma_{1}))\le \tilde{\rho_{1}},\,\,i.e.\,\, d (\pi(\gamma),\pi(\Gamma_{1}))\le \frac{\tilde{\rho_{1}}}{C_{p}\lambda^{J_{1}}},$$
and $d (\pi(\gamma),\pi(\Gamma_{2}))\le \frac{\tilde{\rho_{2}} }{C_{p}\lambda^{J_{2}}}$.
Denote $\rho_{1}=\frac{\tilde{\rho_{1}} }{C_{p}\lambda^{J_{1}}}$ and $\rho_{2}=\frac{\tilde{\rho_{2}}}{C_{p}\lambda^{J_{2}}}$.
Take $L$ to satisfy that $Ce^{-(Lm+\tilde{K}m+N)\delta}<\rho_{1}$ and $Ce^{-(Lm+\tilde{K}m+K)\delta}<\rho_{2}$.
For any sequence $w_{0},w_{1}, \cdots ,w_{n-1}$ from $P_{n}(w_{0})$, construct a $\gamma \in \Sigma$:
\begin{align}
    \overbrace{p_{0},\cdots ,p_{m-1}}^{\infty},\overbrace{p_{0},\cdots ,p_{m-1}}^{\tilde{K}},r_{K}, \cdots ,r_{1},\overbrace{q_{0},\cdots ,q_{l-1}}^{L},w_{0},w_{1}, \cdots ,w_{n-1},\\ \notag \overbrace{q_{0},\cdots ,q_{l-1}}^{L},w_{0},u_{1}, \cdots ,u_{N-2},\overbrace{p_{0},\cdots ,p_{m-1}}^{\tilde{K}},p_{0}
    \mid_0,\cdots ,p_{m-1},\overbrace{p_{0},\cdots ,p_{m-1}}^{\infty}
\end{align}
satisfies there are $\tilde{K}$-copies of $p_{0}, \cdots ,p_{m-1}$ in the left of 0 position.

For any $\gamma$ as above, $\pi(\gamma)$ is a transversely homoclinic point of $p$.
And $f^{J_{1}}\pi(\gamma) \in W^{s}_{\varepsilon}(p)\setminus fW^{s}_{\varepsilon}(p)$, $f^{-(J_{2}+2\tilde{K}m+2Lm+K+N+n)}\pi(\gamma) \in W^{u}_{\varepsilon}(p)\setminus f^{-1}W^{u}_{\varepsilon}(p)$.
So 
$$f^{-(J_{2}+2\tilde{K}m+2Lm+K+N+n)}\pi(\gamma) \in H(p,J_{1}+J_{2}+2\tilde{K}m+2Lm+K+N+n,\varepsilon).
$$
Denote $\tilde{N}=J_{1}+J_{2}+2\tilde{K}m+2Lm+K+N$.
$\tilde{N}$ do not depend on $n$. 
For any $\gamma \in \Sigma$, it is constructed as above and $\pi(\gamma) \in M$.
By construction, there are infinitely many $i<0$ such that $\gamma_{i}=p_{0}$ and infinitely many $i>0$ such that $\gamma_{i}=p_{0}$.
By Theorem \ref{the 5}, there are at most $C(p_{0},p_{0})$ $\pi$-pre-images of $\pi(\gamma)$.

Fix a number $n$, then
$$
\frac{1}{C(p_{0},p_{0})}\sharp P_{n}(w_{0})\le \sharp H(p,\tilde{N}+n,\varepsilon).
$$

\textit{\textbf{ Step 3.}  The metric entropy and the growth of the homoclinic points.}

From \cite{Gu1} and \cite{Gu2}, a topological transitive topological Markov shift $\Sigma$ adimits at most one measure of maximal entropy, and that such a measure exists iff $\exists q\in\mathbb{N} $ such that for every vertex $w_{0}$ in $G$ and some $C_{0}>1$,
$$
C_{0}^{-1}<\sharp P_{n}(w_{0})e^{-nh_{\max}(\Sigma)}<C_{0}
$$
as $n\to \infty$ and $n\in q\mathbb{N}$, where 
$$
h_{\max}(\Sigma)=\sup\left \{ h_{\mu}(\sigma ):\mu \text{ a }\sigma-\text{invariant Borel probability measure on }\Sigma  \right \}.
$$
So
$$
\frac{1}{C_{0}C(p_{0},p_{0})}<\frac{1}{C(p_{0},p_{0})}\sharp P_{n}(w_{0})e^{-nh_{\max}(\Sigma)} \le \sharp H(p,\tilde{N}+n,\varepsilon) e^{-nh_{\max}(\Sigma)}.
$$
By (1) of Theorem \ref{the 5}, the mapping $\pi$ is bound-to-one.
Thus, the inequality implies
$$
\underset{n\to\infty, q|n }{\liminf}\sharp H(p,n,\varepsilon) e^{-nh_{\max}(\Sigma)}>0.
$$
Let $\widehat{\mu}$ be a measure of maximal entropy, by (2) of Theorem \ref{the 5}, then
$$h_{\max}(\Sigma)=h_{\widehat{\mu} }(\sigma)=h_{\mu }(f)=\max\left \{ h_{\nu }(f): \nu \text{ a }f-\text{invariant measure on }M \right \}.$$
So
$$
\underset{n\to\infty, q|n }{\liminf}\sharp H(p,n,\varepsilon) e^{-nh_{\mu}(f)}>0.
$$
as $n\to \infty$ and $n\in q \mathbb{N}$.
We get the result.
Moreover, by the variational principle, this implies
$$
\underset{n\to\infty, q|n }{\liminf}\sharp H(p,n,\varepsilon) e^{-nh_{top}(f)}>0.
$$
\end{proof}

\begin{proof}[Proof of Corollary 1]
    The periodic point $p\in M$ is the point satisfying that $\pi(\underline{p})=p$, where the point $\underline{p}$ is a periodic point on $ \hat{\Sigma}$.
    Because $C^{\infty}$ diffeomorphisms on the compact manifold have measures of maximal entropy,
then by Theorem \ref{theorem b},
$$
\underset{n\to\infty }{\limsup}\frac{\sharp H(p,n,\varepsilon)}{e^{nh_{top}(f)}} >0.
$$
\end{proof}

\section{Super-exponential growth}

In contrast to the lower-bound estimation of the growth rate for transverse homoclinic points,
we now demonstrate that in Newhouse domain the growth rate of homoclinic points can be super-exponential.

Assume that $f,g \in \Diff^{r}(M)$.
Define the $C^{r}$-distance between $f$ and $g$:
$$
d_{r}(f,g)=\underset{0\le k\le r}{\max} \,\,\underset{x\in M}{\sup}\left \| D^{k} f(x)-D^{k} g(x) \right \|,
$$
where $1\le r < \infty$.
And the $C^{\infty}$-distance is  defined as 
\begin{align}\label{distance}
    d_{\infty}(f,g)=\sum_{r=1}^{\infty }  \frac{1}{(1+r)^{2}} \frac{d_{r}(f,g)}{1+d_{r}(f,g)}.
\end{align}
For a $\delta>0$, two $C^{r}$-smooth maps are $\delta$-close if a $C^{r}$-distance between them does not exceed $\delta$, 
and write $f\longmapsto _{\delta ,r}g$ if $g$ is a $C^{r}$ diffeomorphism $\delta$-close to $f$ with respect to $d_{r}$.

A disk in $M$ is a $C^r$ embedding $\phi: B^s \times B^u  \to M$ where $s+u=\dim M$.
We will call a disk $(D,\phi)$ in  $M$ where $D$ is the set $\phi (B^s \times B^u)$.
Let $(D^u,\phi^u)$ and $(D^s,\phi^s)$ be a $C^2$ $u$-disk and a $C^2$ $s$-disk in $M$ with $s+u=\dim M$.
We say that $D^s$ and $D^u$ have a \textit{non-degenerate tangency} at $z$ if there are $C^2$ coordinates $(x,y)=(x_1,\cdots,x_s,y_1,\cdots,y_u)$ near $z$ with $D^s=\{(x,y):y=0\}$ and a curve $t\longmapsto \gamma(t)$ for $t$ in an interval $I$ about 0 such that:
\begin{itemize}
    \item [(i)]$\gamma(0)=z;$
    \item [(ii)] $\gamma(t)\in D^u$ for $t\in I$;
    \item [(iii)] $\gamma'(0)\ne 0$ and $T_zD^s \cap T_zD^u$ is the one-dimensional subspace of $T_zM$ spanned by $\gamma'(0)$;
    \item [(iv)]  $\gamma''(0) \ne 0$ and $\gamma''(0)\notin T_zD^s \cap T_zD^u$.
\end{itemize}
Here $\gamma'(0)$ and $\gamma''(0)$ are the first and second derivatives of $\gamma$ at 0.

Let $\Lambda$ be a hyperbolic basic set for a $C^r$ diffeomorphism $f:M\to M$ with $r\ge 2$.
A non-degenerate tangency $z$ of $W^u(x,f)$ and $W^s(y,f)$ for $x,y\in \Lambda(f)$ will be called a \textit{non-degenerate homoclinic tangency} for $\Lambda(f)$.
$\Lambda(f)$ is a \textit{wild hyperbolic set} if each $g$ $C^r$-near $f$ has the property that $\Lambda(g)$ has the property that $\Lambda(g)$ has a non-degenerate homoclinic tangency.

\begin{proof}[Proof of Theorem \ref{theorem c}] 

For $r\ge 2$, let $\mathcal{N}^{r} \subset \Diff ^{r}(M)$ be the $C^r$ Newhouse domain. There is a $C^{r}$ 
 dense set $D_r \subset \mathcal{N}^{r}$ such that every $f_0\in D_r$ has a homoclinic tangency  with respect to some periodic point $p_{f_0}$.  
 Consider an arbitrary sequence of positive integers $\left\{a_{n}\right\}_{n=1}^{\infty}$.

\textit{\textbf{Step 1.} Choose two different periodic orbits and obtain intervals of homoclinic tangencies with respect to one of them.} 

Take a small $\delta >0$. By Theorem 1 of \cite{N2}, there exists an open set $U_{1}$ in the $\frac{1}{4}\delta$-$C^r$ neighborhood of $f_0 \in D_{0}$ such that for any $g_0 \in U_{1}$, there is a wild basic set $\Lambda(g_0)$ containing $p_{g_0}$ that exhibits a tangency between $W^s(x)$ and $W^u(y)$ for some $x, y \in \Lambda(g_0)$, where $p_{g_0}$ denotes the  continuation of $p_{f_0}$.

By shrinking $U_{1}$ if necessary, we may further assume that $\Lambda(g_0)$ contains another periodic point $q_{g_{0}}$ lying on a distinct orbit from that of $p_{g_0}$, and that $q_{g_{0}}$ depends continuously on $g_{0}$. Moreover, by considering an appropriate iteration, we may assume both $p_{g_{0}}$ and $q_{g_{0}}$ are fixed points of $g_0$.

By Lemma 8.4 of \cite{N3}, there is a $C^r$-dense subset $\widehat{U_1}\subset U_{1}$ such that for every $g_0\in \widehat{U_1}$ and $q_{g_0}\in \Lambda(g_0)$, we can get a homoclinic tangency for $q_{g_0}$.

 Moreover, by Proposition 5 and Lemma 3 of \cite{Kal}, there exists a $C^{r}$-perturbation  $g_0\longmapsto _{\frac{1}{4}\delta ,r}\hat{g}$ such that $\hat{g}$ has an interval $\gamma$ of tangencies between $W^{u}(q_{\hat{g}})$ and $W^{s}(q_{\hat{g}})$.

\textit{\textbf{ Step 2.} Choose homoclinic points of $p_{\hat{g}}$ approximating to $q_{\hat{g}}$. By iteration, they enter into the set with large order.}  

Let $\eta=d(p_{\hat{g}},q_{\hat{g}})$.
For small $\hat{\vep}<\eta/2$,  since the homoclinic points of $p_{\hat{g}}$ are dense in $\Lambda(\hat{g})$, there exists a sequence $\{r_{i}\}_{i\ge 1}\subset \Gamma(p_{\hat{g}})$  such that $r_{i}\to q_{\hat{g}}$.
For each $i$, there exists $t_i<0$ such that $\hat{g}^{t_{i}}r_{i}\in W^u_{\hat{\vep}}(p_{\hat{g}})\setminus \hat{g}^{-1}(W^u_{\hat{\vep}}(p_{\hat{g}}))$. 

We claim that $t_i \to -\infty$ as $i\to \infty$.

\begin{proof}[Proof of the Claim]
Let $\eta_i=d(r_i,q_{\hat{g}})$ with $\eta_i \ll \eta $ and $A=\underset{x\in M}{\sup} \| D_x\hat{g}^{-1}\|$.
Then $\eta_i \to 0$ as $i\to \infty$.
For any $ t\ge 1$, 
\begin{align*}
    d(\hat{g}^{-t}(r_i),\hat{g}^{-t}(q_{\hat{g}}))<A^{t}\eta_{i}.
\end{align*}
For any $i$, 
there exists a positive integer $N_i=[\frac{\ln(\frac{\eta}{2}/\eta_i)}{\ln A}]$ such that $$d(\hat{g}^{-t}(r_i), q)<\eta/2,\quad \forall\,\,1\le t\le N_i,$$ which together with  $\hat{\vep}<\eta/2$ implies $$\hat{g}^{-t}(r_i)\notin W^{u}_{\hat{\vep}}(p_{\hat{g}}),\quad \forall \,\,1\le t\le N_i.$$
Thus, $-t_i>N_i$.
 Note that $N_{i}\to \infty$ 
since there is $\eta_i \to 0$.
So $t_i\to -\infty$ as $i\to \infty$.

\end{proof} 

\textit{\textbf{ Step 3.}  Choose small balls entering the $\hat{\vep} $-neighborhood of $p_{\hat{g}}$ at the same time.}

Let $\eta_0>0$ be small enough such that if $V=\{y\in M: \,\, d(y,\Lambda(\hat{g}))\le \eta_0\}$ and $B_{\eta_0}(\gamma)=\{y\in M:\,\,d(y,\gamma)<\eta_0 \}$, then $\underset{n}{\bigcap } f^{n}V=\Lambda (\hat{g})$ and $V\cap B_{\eta_0}(\gamma)=\emptyset$.

Choose $z$ in the interior of $\gamma$ such that
there are integers $\kappa _{1},\kappa _{2}>0$ with: $$\hat{g}^{-\kappa_{1}}(z) \in \Int(W^{u}_{\eta_0}(q_{\hat{g}})),$$ $$\hat{g}^{\kappa_{2}}(z) \in \Int( W^{s}_{\eta_0}(q_{\hat{g}})).$$
Choose $\varepsilon_{0}<\eta_0$ such that $\hat{g}^{j}B_{\varepsilon_{0}}(z)\cap B_{\varepsilon_{0}}(z) =\emptyset$ for $-\kappa_{1}\le j\le \kappa_{2}$ and $j\ne 0$.

Let $\gamma_{us} \subset \gamma\cap B_{\varepsilon_{0}}(z)$ be a small interval containing $z$ such that:

\begin{itemize}\item  $\hat{g}^{-j}\gamma_{us} \cap B_{\varepsilon_{0}}(z)=\emptyset$,
and $\hat{g}^{j}\gamma_{us} \cap B_{\varepsilon_{0}}(z)=\emptyset$ for $j\ge1$.\\
\item $\hat{g}^{-\kappa_1}\gamma_{us}\subset W^{u}_{\eta_0}(q_{\hat{g}})$, $\hat{g}^{\kappa_2}\gamma_{us}\subset W^{s}_{\eta_0}(q_{\hat{g}})$.
\end{itemize}

\textit{\textbf{ Step 4.}} Perturb to create a new  interval of tangencies between $W^{s}(p_{\hat{g}})$ and $W^{u}(p_{\hat{g}})$.
Then perturb further to generate infinitely many 88transverse homoclinic points of $p_{\hat{g}}$.

Since $r_i\to q_{\hat{g}}$, $W^u_{\eta_0}(r_i)$ is $C^r$-close to $W^u_{\eta_0}(q_{\hat{g}})$. 
Choose a segment $I_{r_i}\subset W^u_{\eta_0}(r_i)$ (resp. $J_{r_i}\subset W^s_{\eta_0}(r_i)$ ) which is $C^r$-close to $\hat{g}^{-\kappa_1}(\gamma_{us})$ (resp. $\hat{g}^{\kappa_2}(\gamma_{us})$ ). 
Then
$\hat{g}^{\kappa_1} (I_{r_i})$ and $\hat{g}^{-\kappa_2} (J_{r_i})$ are both $C^r$-close to $\gamma_{us}$, thus $\hat{g}^{\kappa_1} (I_{r_i})$ and $\hat{g}^{-\kappa_2} (J_{r_i})$ are $C^r$-close to each other. 

\begin{figure}[h]
    \centering
    \includegraphics[width=0.6\linewidth]{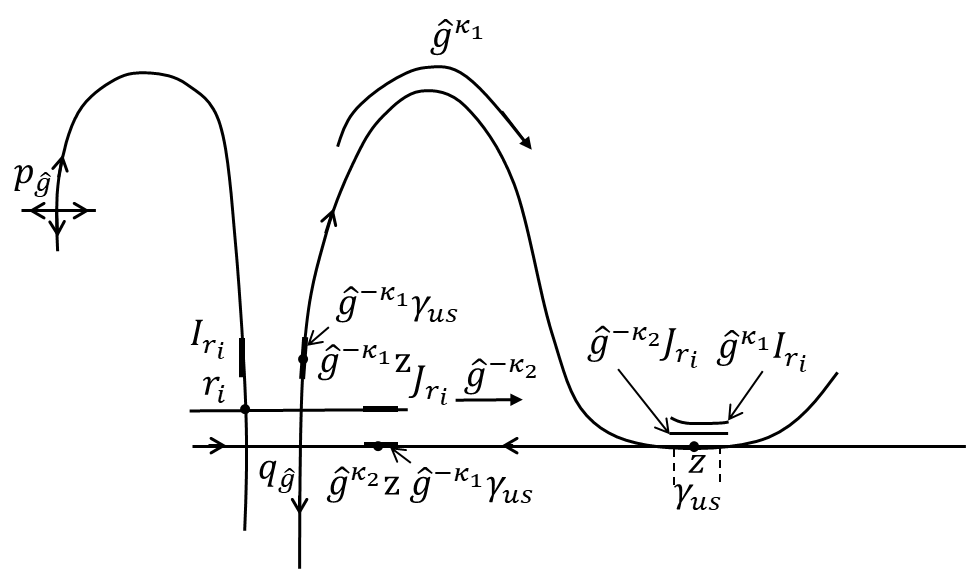}
    \caption{Construct an interval of tangencies between $W^u(p_{\hat{g}})$ and $W^s(p_{\hat{g}})$}
    \label{P9}
\end{figure}

Using a bump function $\phi_1$ with $\phi_{1}(w)=w$ for $w\notin B_{\varepsilon_{0}}(z)$, there exists a $C^{r}$-perturbation $\hat{g}\longmapsto _{\frac{1}{4}\delta ,r}f_1$ such that $f_1$ has a new interval $\gamma_{p_{\hat{g}}}$ (close to $\gamma_{us}$) of homoclinic tangencies between $W^{u}_{loc}(p_{\hat{g}},f_1)$ and $W^{s}_{loc}(p_{\hat{g}},f_1)$.

Now we will choose a new interval in $\gamma_{p_{\hat{g}}}$ where all points have the same order.
For $\gamma_{p_{\hat{g}}}$, there exists $k_i>0$ such that $f_1^{-k_i}\gamma_{p_{\hat{g}}}\cap (W^u_{\hat{\vep}}(p_{\hat{g}}) \setminus f_1^{-1} W^u_{\hat{\vep}}(p_{\hat{g}}))\ne \emptyset$.
Denote $\gamma^u_i=f_1^{-k_i}\gamma_{p_{\hat{g}}}\cap (W^u_{\hat{\vep}}(p_{\hat{g}}) \setminus f_1^{-1} W^u_{\hat{\vep}}(p_{\hat{g}}))$. 
In the unstable manifold $W^u(p_{\hat{g}})$,
there exists $\lambda >1$ such that
$$ d(f_1^{t_i}r_i,f_1^{-\kappa_1+t_i}y)\le \eta_0\lambda^{t_i} \to 0,\quad \text{as} \,\,i\to +\infty, \forall \,\, y\in \gamma_{\hat{g}}. $$
Note that $f_1^{t_i} r_i\in W^u_{\hat{\vep}}(p_{\hat{g}}) \setminus f_1^{-1} W^u_{\hat{\vep}}(p_{\hat{g}})$. Thus $f_1^{-\kappa_1+t_i}y$ is  close to $W^u_{\hat{\vep}}(p_{\hat{g}}) \setminus f_1^{-1} W^u_{\hat{\vep}}(p_{\hat{g}})$ as we want. It follows that  $f_1^{-\kappa_1+t_i}y\in W^u_{\hat{\vep}}(p_{\hat{g}}) \setminus f_1^{-1} W^u_{\hat{\vep}}(p_{\hat{g}})$ or $f_1^{-\kappa_1+t_i-1}y\in W^u_{\hat{\vep}}(p_{\hat{g}}) \setminus f_1^{-1} W^u_{\hat{\vep}}(p_{\hat{g}})$, i.e.,     $|\kappa_1-t_i-k_i|\le1.$  
Further, there exists $m_i>0$ such that $\gamma_i=f_1^{k_i}(\gamma^u_i)\cap f_1^{-m_i}(W^s_{\hat{\vep}}(p_{\hat{g}}) \setminus f_2 W^s_{\hat{\vep}}(p_{\hat{g}}))$ is nonempty and contained in $\gamma_{p_{\hat{g}}}$. 
The order $n_i$ of $\gamma_i$ satisfies $n_i\ge k_i$ and $|\kappa_1-t_i-k_i|\le1$. 
Let $l_{i}$ be the length of $\gamma_{i}$.

Let $C$ be the center of $\gamma_i$. Define a bump function $\phi_2$ by: 
\begin{equation*}\phi_{2}(w)=\begin{cases}w \quad \text{for} \quad  w\notin B_{\frac{l_i}{2}}(C), \\
   \text{graph of } x^{2r+1}\sin(\frac{1}{x}) \text{ in  } (-\frac{l_{i}}{2},\frac{l_i}{2}) \text{ for } w\in \gamma_i. \end{cases} \end{equation*}
Then there exists a $C^{r}$-perturbation $f_1\longmapsto _{\frac{1}{4}\delta ,r}f_2$ such that $f_{2}$ has infinitely many transverse intersections between $W^{u}({p_{\hat{g}}},f_2)$ and $W^{s}({p_{\hat{g}}},f_2)$ in $B_{\frac{l_i}{2}}(C).$
Denote the set of these transverse homoclinic points by $T_i$.
The points in the set $T_i$ are also the intersections between $\gamma_i \subset W^{s}({p_{\hat{g}}},f_2)$ and $\phi_2 \gamma_i \subset W^{u}({p_{\hat{g}}},f_2)$.

Since $\phi_2$ is the identity outside $B_{\frac{l_i}{2}}(C)$, we have
$(f_2^{-1}\circ \phi_2) \gamma_i=f_1^{-1}\gamma_i$, $f_2 \gamma_i=f_1\gamma_i$ and $f_1^{-k_i}\gamma_i \subset W^{u}_{\hat{\varepsilon}}(p_{\hat{g}})\setminus f_2^{-1}W^{u}_{\hat{\varepsilon}}(p_{\hat{g}})$.
Thus
$$f_2^{-k_i} ( \phi_2  \gamma_i )= f_1^{-k_i} \gamma_i  \subset W^{u}_{\hat{\varepsilon}}(p_{\hat{g}})\setminus f_2^{-1}W^{u}_{\hat{\varepsilon}}(p_{\hat{g}}).$$
Then $f_2^{-k_i}T_i  \subset W^{u}_{\hat{\varepsilon}}(p_{\hat{g}})\setminus f_2^{-1}W^{u}_{\hat{\varepsilon}}(p_{\hat{g}})$.
Since $T_i\subset \gamma_i$ with $\gamma_i \subset f_1^{m_i}(W^{s}_{\hat{\varepsilon}}({p_{\hat{g}}})\setminus f_{2}W^{s}_{\hat{\varepsilon}}({p_{\hat{g}}}))$,
$f_2^{m_i}T_i  \subset W^{u}_{\hat{\varepsilon}}(p_{\hat{g}})\setminus f_2^{-1}W^{u}_{\hat{\varepsilon}}(p_{\hat{g}})$.
So $f_2^{-k_i}T_i \subset H({p_{\hat{g}}},n_{i},\hat{\varepsilon})$.

For any $N>0$, we can find $n_i>N$ and $T_i$  such that $f_{2}^{-k_i} x \in H({p_{\hat{g}}},n_i,\hat{\varepsilon})$ for all $x\in T_i$. 
Since $T_i$ contains infinitely many transverse homoclinic points, we can choose at least $n_{i} a_{n_{i}}$ transverse homoclinic points in $H({p_{\hat{g}}},n_{i},\hat{\varepsilon})$, i.e.
\begin{align} \label{N_{1}}
    \sharp H({p_{\hat{g}}},n_{i},\hat{\varepsilon})\ge n_{i} a_{n_{i}}.
\end{align}

Now we show that $f_0$ may be $C^{r}$ $\delta$-perturbed into an $C^r$ open set $U^r_{f_0,N, \delta}\subset \mathcal{N}^r$ such that for any $f\in U^r_{f_0,N,\delta} $, the inequality (\ref{N_{1}}) holds.
 Note that $\{n_{i}\}_{i\ge 1}=\{n_{i}^{(\delta)}\}_{i\ge 1}$ depends on $\delta$.   
 
\begin{figure}[h]
    \centering
    \includegraphics[width=0.8\linewidth]{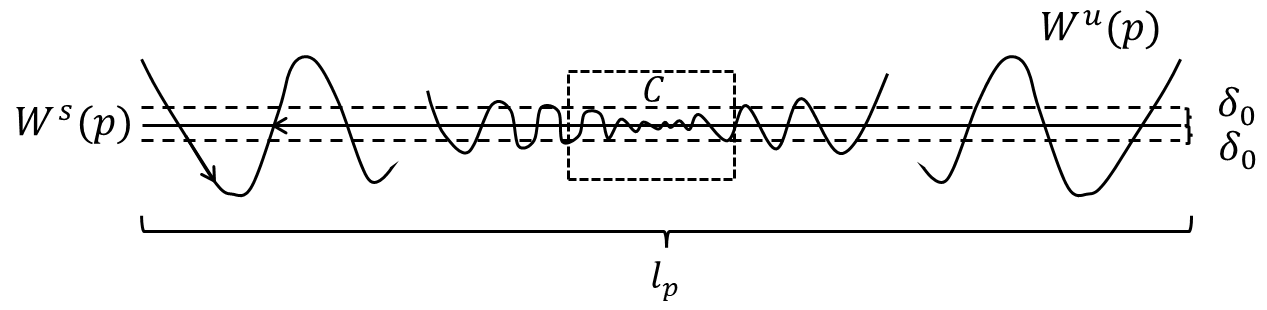}
    \caption{By a small $C^r$-perturbation, the inequality (\ref{N_{1}}) still holds.}
    \label{hold1}
\end{figure}

\textit{\textbf{ Step 5.}} Construct  $C^r$ residual set of $\mathcal{N}^r$.

(1) $2\le r<\infty$.   Let $\delta_m=\frac1m$, and denote   $U^{r}_{\delta_m,N} =\underset{f_0}{\bigcup }  U^r_{f_0,N,\delta_m}$. 
By iterating this process, there exists a $C^r$ residual set $\mathcal{R} _{a}^r=\underset{N\ge 1}{\bigcap } \underset{\delta_m}{\bigcup} U^r_{\delta_m,N} =\underset{N\ge 1}{\bigcap }  \underset{\delta _m}{\bigcup }  \underset{f_0\in D_r}{\bigcup} U^r_{f_0,N,\delta_m}$ depending on the sequence $\{ a_{n} \} _{n=1}^{\infty }$ with the property that $f\in \mathcal{R}^r _{a}$ implies that for any $N>0$, we can find $U^r_{f_0,N, \delta_m}\ni f$ with $k_N:=n_{(f_0,N, \delta_m)}>N$ satisfying:
\begin{align}
    \sharp H(p_{\hat{g}},k_N,\varepsilon)\ge k_N  a_{k_N},
\end{align}
i.e.
\begin{align}
    \frac{1}{a_{k_N}} \sharp H(p_{\hat{g}},k_N,\varepsilon)\ge k_N.
\end{align}
This proves the result for $2\le r<\infty$.

(2)  $r=\infty$.    Let $f_0\in D_{\infty}$. For any $\delta>0$, and $2\le m<\infty$, by Step 1-4,   we can obtain  that $f_0$ can  be $C^{m}$ $\delta$-perturbed into an $C^m$ open set $U^m_{f_0,N, \delta}$ such that for any $f\in U^m_{f_0,N,\delta} $, the inequality (\ref{N_{1}}) holds.  Let $\delta_m=\frac1m$.  Denote $U^{\infty}_{\delta_m,N} =\underset{f_0\in D_{\infty}}{\bigcup }  (U^m_{f_0,N,\delta_m}\cap \Diff^{\infty}(M))$. It is obvious that $U^{\infty}_{\delta_m,N}$  is a $C^{\infty}$ open  subset of $\mathcal{N}^{\infty}$. We show that it is also $C^{\infty}$  dense in $\mathcal{N}^{\infty}$.   
Denote $C=\sum_{2}^{\infty} \frac{1}{(1+r)^2}$ and $b_m=\sum_{m+1}^{\infty} \frac{1}{(1+r)^2}$.  Then \begin{equation}\label{R}
    \sum_{r=2}^{m } \frac{\delta_m}{(1+r)^{2}}=\delta_m\sum_{r=2}^{m } \frac{1}{(1+r)^{2}}\le C\delta_m,
\end{equation}
By the definition of $C^{\infty}$ distance (\ref{distance}),  we have $U^m_{N,\delta_m} $ is $(C\delta_m+b_m)$-dense in $\mathcal{N}^{\infty}$ with respect to $d_{\infty}$.  Thus $U^{\infty}_{N} =\underset{m}{\bigcup }  (U^m_{\delta_m, N}\cap \Diff^{\infty}(M))$ is $C^{\infty}$ dense in $\mathcal{N}^{\infty}$.  It follows that 
$$\mathcal{R} _{a}^{\infty}=\underset{N\ge 1}{\bigcap } U^{\infty}_{N} $$
 is a residual subset of $\mathcal{N}^{\infty}$, satisfying Theorem \ref{theorem c} for $r=\infty$.
\end{proof}

\end{document}